\newtheorem{thm}{Theorem}[section]
\newtheorem{cor}[thm]{Corollary}
\newtheorem{lem}[thm]{Lemma}
\numberwithin{equation}{section}
\newcommand{\Z}{\mathbb{Z}}
\newcommand{\Q}{\mathbb{Q}}
\newcommand{\Chi}{\chi}
\begin{document}
\title{The class number formula for imaginary quadratic fields}

\author{Joseph Lewittes
}
\address{Department of Mathematics and Computer Science\\
Lehman College CUNY \\
}
\email{joseph.lewittes@lehman.cuny.edu}

\date{\today}

\begin{abstract}
It is shown that the class number for negative discriminant $D$ can be expressed in terms of the base $B$ expansions of reduced fractions $\frac{x}{|D|}$, where $B$ is an integer prime to $D$. This result is then formulated to obtain information about the distribution of the values of $\Chi(x)$, where $\Chi$ is the quadratic character associated to $D$. This leads to simplified formulas for the class number in certain cases. 
\end{abstract}

\subjclass[2010]{Primary 11R11; Secondary 11R29, 11L40, 11Y40.}

\keywords{class number formula, imaginary quadratic fields}

\maketitle

\section{Introduction}   
Associated to an imaginary quadratic number field $K$ are three important items: $D$, the discriminant; $h$, a positive integer which is the order of the ideal class group; $\Chi$, a quadratic character which governs how rational primes factor in $K$. The field $K$ is uniquely determined by its discriminant. To indicate the dependence of $h$, $\Chi$ on $D$, we write $h(D)$, $\Chi_{D}$, except in cases where $D$ is clear from the context and so $h$, $\Chi$ suffice. Below, $\Chi$ will be given explicitly. Dirichlet (writing in the framework of Gauss' theory of binary quadratic forms) proved a class number formula for $h$, which in modern form is 

\begin{equation} \label{eq1}
h(D)=-\frac{1}{|D|}{\sum_{x = 1}^{|D|}\Chi_{D}(x)x}
\end{equation}

Actually this is valid only for $D<-4$, which we assume throughout; for the excluded cases $D =-3,-4$ a minor correction is needed which does not concern us here. For our purposes, one need not know the actual significance of $D,h,\Chi$ for the field $K$. All of our effort will be concentrated on the sum on the right side of the formula, which involves only rational arithmetic. For further information, one may consult \cite{BoSha}, pages 234-238, 342-347. Here we present only some necessary definitions and notation. The paper \cite{Berndt} deals with character sums but the techniques and results there have little overlap with our methods and conclusions here.

Every $K$ is uniquely of the form $\Q(\sqrt{m})$, where $m$ is a negative square-free integer. $D$ is then defined to be $D = m$, if $m \equiv 1 \pmod{4}$ and $D=4m$ otherwise. We always set $N=|D|$. $\Chi$ is an odd Dirichlet quadratic character mod $N$. Concretely this means $\Chi:\Z \rightarrow \{0,1,-1\}$ with the following properties:
\begin{enumerate}
\item $\Chi(a)=0$ if gcd$(a, N)>1, \Chi(a)=1$ or $-1$ if gcd$(a, N)=1$ \label{eqi}
\item $\Chi(a)= \Chi(b)$ whenever $a \equiv b \pmod{N}$ \label{eqii}
\item $\Chi(ab)= \Chi(a)\Chi(b)$ \label{eqiii}
\item $\Chi(-1)=-1$. \label{eqiv}
\end{enumerate}

Note that in (\ref{eq1}), $\Chi(x)=0$ whenever gcd$(x, N)>1$, so such an $x$ makes no contribution to the sum. For our applications the non-zero values of $\Chi(x)$ need to be known explicitly. The simplest case is when $D=m \equiv 1 \pmod{4}$, in which case $\Chi_{D}(x)=\left(\frac{x}{|m|}\right)$, the Jacobi symbol. $D \equiv 0 \pmod{4}$ is somewhat more complicated. For this we introduce the character $\Chi_{4}(x)=(-1)^{\frac{x-1}{2}}$, whose values are $1,-1$ according as $x \equiv 1$ or $x \equiv 3 \pmod{4}$; also the character $\Chi_{8}(x)=(-1)^{\frac{x^2-1}{8}}=1$ or $-1$ according as $x \equiv 1,7 \pmod{8}$ or $x\equiv 3,5 \pmod{8}$. Then with $D=4m$,

$$
\Chi_D(x) = \left\{
\begin{array}{ll}
\Chi_{4}(x)\left(\frac{x}{|m|}\right); & \text{if} \; m \equiv 3 \pmod{4}\\
\Chi_{8}(x)\left(\frac{x}{|n|}\right); & \text{if} \; m=2n,n \equiv 1\pmod{4}\\ 
\Chi_{4}(x)\Chi_{8}(x)\left(\frac{x}{|n|}\right); & \text{if}\; m=2n,n \equiv 3 \pmod{4} 
\end{array}
\right.
$$

The motivation for this paper was an article by K. Girstmair, \cite{KG}. I want to thank Professor Pieter Moree who alerted me to \cite{KG}, pointing out its relevance to some previous work of mine. Girstmair's result is as follows. Let $p>3$ be a prime $\equiv 3 \pmod{4}$, $B$ a primitive root mod $p$ and let $\frac{1}{p} = {\sum_{i = 1}^{\infty}\frac{a_i}{B^{i}}}$ be the base $B$ expansion of the fraction $\frac{1}{p}$. Then

\begin{equation} \label{eq2}
(B+1)h(-p)={\sum_{i = 1}^{p-1}(-1)^{i}a_i}.
\end{equation}

Here $-p \equiv 1\pmod{4}$ is the discriminant of the field $K=\Q{(\sqrt{-p})}$ and the related character is $\left(\frac{x}{p}\right)$, the Legendre symbol. This is certainly an interesting result, but it is limited to the special case $D=-p$, with $B$ a primitive root mod $p$. In the next section it will be shown that an analogous formula holds for any $D$ with any base $B$ prime to $D$. Section 3 then shows how the base $B$ formula can be recast in terms of $\Chi$ to produce simpler class number formulas, which give information about the distribution of the values of $\Chi(x)$ in certain intervals. Then in Sections 4 and 5, applications of the new formulas to the cases $D \equiv 1\pmod{4}$ and $D \equiv 0 \pmod{4}$, respectively, are presented. A sample of one such result is Corollary \ref{cor43}:

\begin{equation} \label{eq3}
\mbox{if} \; D \equiv 1 \pmod{4} \; \mbox{and} \; 3\not| D, \; \mbox{then}\; h(D)=\left|{\sum_{1 \leq x < \frac{N}{6}}\Chi(x)}\right|.
\end{equation}

For a simple numerical example of (\ref{eq1}) and (\ref{eq2}), take $D=-7$. By (\ref{eq1}), $h(-7)=-\frac{1}{7}{\sum_{x=1}^{6}\left(\frac{x}{7}\right)x}$. (Note that it is not a priori obvious that the right side is an integer or positive, though by definition $h$ is always a positive integer. This is part of the magic of the class number formula.) Evaluating the sum gives $h(-7) = -\frac{1}{7}\left((1)1+(1)2+(-1)3+(1)4+(-1)5+(-1)6\right)=1$. (Observe by (\ref{eq3}), $h(-7)=|\Chi(1)|=1$, one step). Now let $B=10$, a primitive root mod $7$, then the base 10 expansion of $\frac{1}{7}$ is the well-known decimal $0.\overline{142857}$, the bar indicating endless repetition of the period block 142857. Now consider (\ref{eq2}). The left side is $(10+1)h(-7)=11$ and the right side is $-1+4-2+8-5+7=11$, which illustrates Girstmair's proposition. 

When doing numerical examples it is useful to have a table of values of $h(D)$. One such table is in \cite{BoSha}, Table 4, p. 425-426. This table gives $h(a)$ where $a=|m|$ in our notation; so to find $h(D)$ look for $h(a)$ with $a=|D|$ if $D \equiv 1\pmod{4}$, and $a=|D|/4$ if $D \equiv 0\pmod{4}$. (Note that the continuation of Table 4 to p. 426 has an incorrect heading).

\section{Base B expansions}

Let $N$ be an integer $>1$ and $X=\{x : 1 \leq x \leq N$ and gcd$(x,N)=1\}$. Denoting by $|S|$ the number of elements in the finite set $S, |X|=\phi(N),\phi$ being Euler's function. We shall often make use of the obvious fact that if $x,x' \in X$ and $x'\equiv x \pmod{N}$ then $x'=x$. From now on $x$ always denotes an element of $X$. For an integer $B>1$ the numbers $0,1,...,B-1$ are called the $B$-digits; there are $B$ of them. Expanding a real number in base $B$ is a well-known procedure; here we only discuss what is needed for our purposes. We assume always that $B$ is relatively prime to $N$. The base $B$ expansion of a fraction $\frac{x}{N}$ means an infinite series $\sum_{i = 1}^{\infty}\frac{a_i}{B^{i}}$ where each $a_i$ is a $B$-digit and the series converges to $\frac{x}{N}$. Such a series is found by the elementary school long division of $x$ by $N$, which we call LDA, the long division algorithm. It amounts to the following. Set $x_{1}=x$ and use integer division to divide $Bx_{1}$ by $N$, producing the quotient $a_1$ and remainder $x_2: Bx_{1}=a_{1}N+x_{2}, 0 \leq x_{2} < N$. $Bx_{1}>0$ implies $a_{1} \geq 0$ and as $B$, $x_{1}$ are both relatively prime to $N$, $Bx_1$ is also, hence $\frac{Bx_1}{N}$ is not an integer so $x_{2}>0$. Noting $x_{2} \equiv Bx_{1} \pmod{N}$, one sees $x_{2}$ is prime to $N$, so $x_{2} \in X$. $\frac{Bx_1}{N} = a_{1}+\frac{x_{2}}{N}$ shows $a_{1} < \frac{Bx_{1}}{N} < a_{1}+1$, so $a_{1}=\left[\frac{Bx_{1}}{N}\right]$, where, as usual, $[t]$ denotes the greatest integer $\leq t$. Finally $\frac{x_{1}}{N} < 1$ shows $\frac{Bx_{1}}{N} < B$, so $0 \leq a_{1} \leq B-1$ and $a_{1}$ is a $B-$digit. Now this process may be iterated to produce an infinite sequence of equations 

\begin{equation}
\begin{array}{l}
Bx_{1}=a_{1}N+x_{2} \\
Bx_{2}=a_{2}N+x_{3} \\
\vdots \\
Bx_{i-1}=a_{i-1}N+x_{i} \\
Bx_{i}=a_{i}N+x_{i+1} \\
\vdots \\
\end{array} 
\label{eq4}
\end{equation}

Each $a_{i}$ is a $B-$digit, each $x_{i} \in X$, $a_{i}=\left[\frac{Bx_{i}}{N}\right]$. An easy inductive argument shows that for $i \geq 1$, $\frac{x_1}{N}=\frac{a_1}{B^1}+\frac{a_2}{B^2}+...+\frac{a_i}{B^i}+\frac{x_{i+1}}{B^{i}N},~ 0 < \frac{x_{i+1}}{B^{i}N} < \frac{1}{B^{i}} \rightarrow 0$ as $i \rightarrow \infty$, so $\sum_{x = 1}^{\infty}\frac{a_i}{B^i}$ converges to $\frac{x_1}{N}$, providing the base $B$ expansion for $\frac{x}{N}$. Working backwards from equation $i$ we have $x_{i+1} \equiv Bx_{i} \equiv B^{2}x_{i-1} \equiv ... \equiv B^{i}x_{1}\pmod{N}.$ Let $e$ be the order of $B$ mod $N$, the smallest positive integer such that $B^{e} \equiv 1\pmod{N}$; by Euler's theorem $e|\phi(N)$. The $e$ numbers $x_{1},x_{2},...,x_{e}$ are all distinct, because $x_{i}=x_{j}$ for $1 \leq i < j \leq e$ implies $B^{j-1}x_{1} \equiv x_{j} = x_{i} \equiv B^{i-1}x_{1}\pmod{N}$, hence $B^{j-i} \equiv 1\mod{N}$, contradicting the definition of $e$. On the other hand, $x_{e+1} \equiv B^{e}x_{1} \equiv x_{1}(mod~N)$ implies $x_{e+1} = x_{1}$. Thus in (\ref{eq4}) equation $e+1$ must coincide with equation $1$ and in general equation $e+i$ coincides with equation $i$, for all $i$. Thus the LDA consists of the first $e$ equations and then the block repeats forever. In particular the digits in the $B$ expansion are periodic with period $e: a_{j}=a_{i}$ whenever $j \equiv i \pmod{e}$. The block $a_{1}a_{2}...a_{e}$ is called the period of $\frac{x}{N}$ and we write $\sum_{x = 1}^{\infty}\frac{a_i}{B^i}$ as $0.\overline{a_{1}a_{2}...a_{e}}$ or $0.\overline{a_{1}a_{2}...a_{e}}_{(B)}$ when necessary to indicate the base $B$. An important role will be played by the fact that the $a_i$ can be expressed in another way. For this we introduce a non-standard but useful notation. For any $z \in \Z$ there is a unique $y, 1 \leq y \leq N$ such that $z \equiv y\pmod{N}$ and we denote this $y$ as $ \langle z \rangle $; thus $z_{1} \equiv z_{2} \pmod{N}$ iff $ \langle z_{1} \rangle  =  \langle z_{2} \rangle$. 

\begin{lem} \label{lm1}
The $B$-digits $a_{1},a_{2},...$ in the base $B$ expansion of $\frac{x_1}{N}$ are given by

\begin{equation} \label{eq5}
a_{i}=\frac{B \langle B^{i-1}x_{1} \rangle - \langle B^{i}x_{1} \rangle }{N} ~.
\end{equation}
\end{lem}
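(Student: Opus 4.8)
The plan is to extract the identity directly from the long division equations \eqref{eq4}, the only real work being to recognize each $x_i$ as the canonical residue $\langle B^{i-1}x_1\rangle$. First I would recall that the $i$-th line of \eqref{eq4} is $Bx_i = a_iN + x_{i+1}$, and that it has already been shown that $x_i\in X$ and $x_{i+1}\in X$, and moreover (by working backwards through the equations) that $x_i\equiv B^{i-1}x_1\pmod N$ and $x_{i+1}\equiv B^{i}x_1\pmod N$.

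Next comes the identification step. By the definition of the bracket notation, $\langle z\rangle$ is the unique integer $y$ with $1\le y\le N$ and $z\equiv y\pmod N$; since every element of $X$ lies in $\{1,\dots,N\}$, the congruence $x_i\equiv B^{i-1}x_1\pmod N$ together with $x_i\in X$ forces $x_i=\langle B^{i-1}x_1\rangle$, and likewise $x_{i+1}=\langle B^{i}x_1\rangle$. (For $i=1$ this is just $x_1=\langle x_1\rangle$, which holds because $x_1=x\in X$.) It is worth noting in passing that this also recovers the earlier formula $a_i=\left[\tfrac{Bx_i}{N}\right]$ in the equivalent form $a_i=\left[\tfrac{B\langle B^{i-1}x_1\rangle}{N}\right]$, but the cleaner route to \eqref{eq5} is the subtraction identity below.

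Finally I would substitute into $Bx_i=a_iN+x_{i+1}$ and solve: $a_iN = Bx_i - x_{i+1} = B\langle B^{i-1}x_1\rangle - \langle B^{i}x_1\rangle$, and dividing by $N$ gives \eqref{eq5}. There is no substantive obstacle here; the proof is essentially bookkeeping, and the only point deserving a moment's care is to make sure that the two residues entering the formula are taken in the range $[1,N]$ demanded by the definition of $\langle\cdot\rangle$ — which is precisely what membership of $x_i,x_{i+1}$ in $X$ guarantees.
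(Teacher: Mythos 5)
Your proposal is correct and follows exactly the paper's own argument: identify $x_i=\langle B^{i-1}x_1\rangle$ and $x_{i+1}=\langle B^i x_1\rangle$ from the congruences established after \eqref{eq4} together with $x_i,x_{i+1}\in X$, substitute into the $i$-th long division equation, and solve for $a_i$. No differences worth noting.
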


\begin{proof}
We've seen $x_{i+1} \equiv B^{i}x_{1}\pmod{N}$ so $x_{i+1} =  \langle B^{i}x_{1} \rangle $ and similarly, $x_{i}= \langle B^{i-1}x_{1} \rangle $. So equation $i$ in the LDA becomes $B \langle B^{i-1}x_{1} \rangle =a_{i}N+ \langle B^{i}x_{1} \rangle $. Solving for $a_i$ proves the lemma.
\end{proof}

We call the sequence of the $e$ distinct numbers $x_{1},x_{2},...,x_{e}$ in the LDA a $B$-cycle, denoted as $C=(x_{1},x_{2},...,x_{e})$. Since the LDA for $\frac{x_{2}}{N}$ starts with equation $2$, one sees $\frac{x_{2}}{N}=0.\overline{a_{2}...a_{e}a_{1}}$ and so on. Thus $C = (x_{2},...,x_{e},x_{1})$ and any $x_{i}$ in the cycle can be chosen as the initial term. (Actually these cycles are just the permutation cycles for the permutation $x \rightarrow  \langle Bx \rangle $ on $X$). Since $|X|=\phi(N)$ and each cycle has $e$ numbers, the total number of cycles for $B$ on $X$ is $f=\frac{\phi(N)}{e}$. A numerical example may be useful here.

Let $N=15,$ $B=7$. The LDA for $\frac{1}{15}$ is

$$
\begin{array}{rcl}
7 \times 1 &=& 0 \times 15 + 7 \\
7 \times 7 &=& 3 \times 15 + 4 \\
7 \times 4 &=& 1 \times 15 + 13 \\
7 \times 13 &=& 6 \times 15 + 1 \\
\end{array} 
$$

Since $x_{5}=x_{1},e=4$ and $\frac{1}{15}=0.\overline{0316}_{(7)}$; the cycle containing $1$ is $C_{1}=(1,7,4,13)$. Starting with $x_{1}=14$ one finds $\frac{14}{15}=0.\overline{6350}_{(7)}$ and the cycle $C_{2}=(14,8,11,2)$.

After these preliminaries we return to the class number formula. Fix $D<-4,N=|D|,X$ the set of integers from $1$ to $N$ relatively prime to $N,h=h(D),\Chi=\Chi_{D}$. Choose a base $B>1$ prime to $N$ with $e$ being the order of $B$ mod $N$. The formula (\ref{eq1}) may now be written as $h=-\frac{1}{N} \sum_{x \in X}\Chi(x)x$. Let $C=(x_{1},x_{2},...,x_{e})$ be a cycle for $B$ on $X$. We isolate the contributions of $C$ to this formula for $h$ by defining 

\begin{equation} \label{eq6}
h_{C}=-\frac{1}{N}\sum_{x \in C}\Chi(x)x=-\frac{1}{N}\sum_{i=1}^{e}\Chi(x_i)x_i.
\end{equation}

$x_{i} \equiv B^{i-1}x_{1} \pmod{N}$ shows $\Chi(x_{i})=\Chi(B)^{i-1}\Chi(x_{1})$, and writing $x_{i}= \langle B^{i-1}x_{1} \rangle $, (\ref{eq6}) becomes

\begin{equation} \label{eq7}
h_{C}=-\frac{\Chi(x_{1})}{N}\sum_{i=1}^{e}\Chi(B)^{i-1} \langle B^{i-1}x_{1} \rangle .
\end{equation}

There are now two cases to consider depending on $\Chi(B)= \pm 1$. If $\Chi(B)= -1$ then $B^{e} \equiv 1\pmod{N}$ implies $1=\Chi(B^{e})=(-1)^{e}$, so $e$ is even. Since for any $i$, $x_{i+1} \equiv Bx_{i} \pmod{N}, \Chi(x_{i+1})=\Chi(B)\Chi(x_{i})=-\Chi(x_{i})$ so half the numbers in a cycle have $\Chi=1$ and half $\Chi=-1$. We now normalize $C$ by choosing the initial $x_{1}$ to have $\Chi(x_{1})=1$. Now (\ref{eq7}) becomes

\begin{equation} \label{eq8}
h_{C}=-\frac{1}{N}\sum_{i=1}^{e}(-1)^{i-1} \langle B^{i-1}x_{1} \rangle .
\end{equation}

For example, referring back to the example $N=15$, corresponding to $D=-15$, we see the cycle $C_1$ is normalized, but $C_2$ is not, since $\Chi(14)=-1$, as $\Chi_{-15}(14)=\left(\frac{14}{15}\right) = -1$. To normalize $C_2$ we set $C_{2}=(2,14,8,11),\Chi_{-15}(2)=\left(\frac{2}{15}\right) = 1$.

If $\Chi(B)=1$, then $x_{i+1} \equiv Bx_{i} \pmod{N}$ shows $\Chi(x_{i+1})=\Chi(B)\Chi(x_{i})=\Chi(x_{i})$ so all the numbers in a cycle have the same $\Chi$ value. We define $\Chi(C)=1$ if all $\Chi(x_{i})=1,\Chi(C)=-1$ if all $\Chi(x_{i})=-1$. In this case (\ref{eq7}) becomes

\begin{equation} \label{eq9}
h_{C}=-\frac{\Chi(C)}{N}\sum_{i=1}^{e} \langle B^{i-1}x_{1} \rangle .
\end{equation} 

Again using the previous example with $D=-15$ but with $B=4,\Chi_{-15}(4)=1$. One verifies easily that $e=2$ and there are $\frac{\phi(15)}{2}=4$ cycles for $B=4:$
$$
C_{1}=(1,4),C_{2}=(2,8),C_{3}=(7,13),C_{4}=(11,14)
$$
and 
$$
\Chi(C_{1})=\Chi(C_{2})=1, \Chi(C_{3})=\Chi(C_{4})=-1.
$$
Keeping all the previous notation, here is the main result of this section.

\begin{thm}
Let $C_{1},C_{2},...,C_{f}$ be the cycles for $B$ on $X$. Write $C_{j}=(x_{1}^{(j)},x_{2}^{(j)},...,x_{e}^{(j)}),1 \leq j \leq f$ and let $\frac{x_{1}^{(j)}}{N}=0. \overline{a_{1}^{(j)}a_{2}^{(j)}...{a_{e}^{(j)}}_{(B)}}$.

\begin{enumerate}
\item Case 1: $\Chi(B)=-1$. Assume all cycles $C_{j}$ normalized. Then

\begin{equation} \label{eq10}
(B+1)h(D)=\sum_{j=1}^{f}  \sum_{i=1}^{e}(-1)^{i}a_{i}^{(j)}
\end{equation} 

\item Case 2: $\Chi(B)=1$. Then

\begin{equation} \label{eq11}
(B-1)h(D)=-\sum_{j=1}^{f}\Chi(C_{j})  \sum_{i=1}^{e}a_{i}^{(j)}
\end{equation}
\end{enumerate} 
\end{thm}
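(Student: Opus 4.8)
The plan is to reduce everything to a single identity for one cycle and then sum over the $f$ cycles, using $h(D)=\sum_{j=1}^{f}h_{C_{j}}$, which is immediate from (\ref{eq6}) and $h=-\frac1N\sum_{x\in X}\Chi(x)x$ together with the fact (noted after Lemma \ref{lm1}) that $C_{1},\dots,C_{f}$ are the cycles of the permutation $x\mapsto\langle Bx\rangle$ and hence partition $X$. So it suffices to prove, for a single cycle $C=(x_{1},\dots,x_{e})$ with period $a_{1}\cdots a_{e}$, that $(B+1)h_{C}=\sum_{i=1}^{e}(-1)^{i}a_{i}$ in Case~1 (with $C$ normalized) and $(B-1)h_{C}=-\Chi(C)\sum_{i=1}^{e}a_{i}$ in Case~2.

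The only tool needed is Lemma \ref{lm1}. Write $y_{i}=\langle B^{i-1}x_{1}\rangle$, so that $y_{1}=x_{1}$, $y_{i+1}=\langle B^{i}x_{1}\rangle$ for $1\le i\le e-1$, and --- crucially, since $B^{e}\equiv 1\pmod N$ --- also $\langle B^{e}x_{1}\rangle=\langle x_{1}\rangle=y_{1}$. Lemma \ref{lm1} reads $Na_{i}=By_{i}-\langle B^{i}x_{1}\rangle$. In Case~2, summing this over $i=1,\dots,e$ and observing that $\sum_{i=1}^{e}\langle B^{i}x_{1}\rangle=y_{2}+\cdots+y_{e}+y_{1}=\sum_{i=1}^{e}y_{i}$ (the numbers $\langle B^{i}x_{1}\rangle$ run over the same cycle $C$, merely cyclically shifted) yields $N\sum_{i}a_{i}=(B-1)\sum_{i}y_{i}$. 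Comparing with (\ref{eq9}), namely $h_{C}=-\frac{\Chi(C)}{N}\sum_{i}y_{i}$, gives exactly $(B-1)h_{C}=-\Chi(C)\sum_{i}a_{i}$; summing over cycles is (\ref{eq11}).

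Case~1 is the same computation carried out with the alternating sign. Multiplying $Na_{i}=By_{i}-\langle B^{i}x_{1}\rangle$ by $(-1)^{i}$ and summing, the second piece is $\sum_{i=1}^{e}(-1)^{i}\langle B^{i}x_{1}\rangle=\sum_{i=1}^{e-1}(-1)^{i}y_{i+1}+(-1)^{e}y_{1}$; here $e$ is even (because $\Chi(B)=-1$ and $B^{e}\equiv 1\pmod N$ force $(-1)^{e}=\Chi(B^{e})=1$), so the reindexing $k=i+1$ together with $(-1)^{e}y_{1}=y_{1}$ turns this into $-\sum_{i=1}^{e}(-1)^{i}y_{i}$. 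Hence $N\sum_{i}(-1)^{i}a_{i}=(B+1)\sum_{i}(-1)^{i}y_{i}$, and comparison with (\ref{eq8}), rewritten as $h_{C}=\frac1N\sum_{i}(-1)^{i}y_{i}$ (valid once $C$ is normalized so that $\Chi(x_{1})=1$), gives $(B+1)h_{C}=\sum_{i}(-1)^{i}a_{i}$. Summing over the normalized cycles gives (\ref{eq10}).

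The only point requiring care --- and the closest this argument comes to an obstacle --- is the sign bookkeeping in Case~1: tracking the cyclic shift $i\mapsto i+1$ against the factor $(-1)^{i}$, and using the parity of $e$ so that the wrap-around term $(-1)^{e}\langle B^{e}x_{1}\rangle=(-1)^{e}y_{1}=y_{1}$ enters with the correct sign. Everything else is routine telescoping, and Case~2 is the $(-1)^{i}\equiv 1$ specialization of the same manipulation.
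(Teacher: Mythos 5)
Your proof is correct and is essentially the paper's own argument run in the opposite direction: where the paper writes $h_{C}$ in shifted and unshifted form, multiplies by $B$, and combines to produce $\sum_i(\pm1)^i a_i$ via Lemma \ref{lm1}, you sum Lemma \ref{lm1} against the weights $(\pm1)^i$ and identify both resulting sums with $Nh_C$ using the same cyclic re-indexing and the same parity-of-$e$ observation. No substantive difference.
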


\begin{proof}
When $\Chi(B)=-1$, $e$ is even and in (\ref{eq8}) both $(-1)^{i-1}$ and $ \langle B^{i-1}x_{1} \rangle $ have period $e$ so that (\ref{eq8}) can be written as $h_{C}=-\frac{1}{N}\sum_{i=1}^{e}(-1)^{i} \langle B^{i}x_{1} \rangle .$ On the other hand, multiply (\ref{eq8}) by $B$ and absorb the outside minus sign by replacing $(-1)^{i-1}$ by $(-1)^{i}$ to obtain $Bh_{C}=\frac{1}{N}\sum_{i=1}^{e}(-1)^{i}B \langle B^{i-1}x_{1} \rangle .$ Thus, $(B+1)h_{C}=Bh_{C}+h_{C}$
$$
\begin{array}{l}
= \frac{1}{N}\sum_{i=1}^{e}(-1)^{i}B \langle B^{i-1}x_{1} \rangle -\frac{1}{N}\sum_{i=1}^{e}(-1)^{i} \langle B^{i}x_{1} \rangle\\

= \sum_{i=1}^{e}(-1)^{i}\left(\frac{B \langle B^{i-1}x_{1} \rangle - \langle B^{i}x_{1} \rangle }{N}\right)\\

=\sum_{i=1}^{e}(-1)^{i}a_{i},\\ 
\end{array}
$$
by Lemma \ref{lm1}, if $\frac{x_{1}}{N}=0.\overline{a_{1}a_{2}...a_{e}}_{(B)}$. Now $h=\sum_{j=1}^{f}h_{C_{j}}$, so putting a superscript $(j)$ on the data for $C_j$ proves Case 1.

Now assume $\Chi(B)=1$. Since $B$ has period $e$, (\ref{eq9}) can be written as

$$
h_{C}=-\frac{\Chi(C)}{N}\sum_{i=1}^{e} \langle B^{i}x_{1} \rangle .
$$
On the other hand, multiply (\ref{eq9}) by $B$ to get $Bh_{C}=-\frac{\Chi(C)}{N}\sum_{i=1}^{e}B \langle B^{i}x_{1} \rangle $. Combining, $(B-1)h_{C}=Bh_{C}-h_{C}=-\Chi(C)\sum_{i=1}^{e}\frac{B \langle B^{i-1}x_{1} \rangle - \langle B^{i}x_{1} \rangle }{N}=-\Chi(C)\sum_{i=1}^{e}a_{i}$, by Lemma \ref{lm1}, where $\frac{x_{1}}{N}=0.\overline{a_{1}a_{2}...a_{e}}_{(B)}$. Since $h=\sum_{j=1}^{f}h_{C_{j}}$, putting a superscript $(j)$ on the data for $C_{j}$ proves Case 2 and completes the proof of the theorem.
\end{proof}

To illustrate the theorem consider again $D=-15$. With $B = 7,$ $e=4,\Chi(7)=-1$ we are in Case 1, the normalized cycles are $C_{1}=(1,7,4,13),$ $C_{2}=(2,14,8,11),\frac{1}{15}=0.\overline{0316}_{(7)},\frac{2}{15}=0.\overline{0635}_{(7)}$. The right side of (\ref{eq10}) is 
$$
\sum_{j=1}^{2}\sum_{i=1}^{4}(-1)^{i}a_{i}^{(j)}=(-0+3-1+6)+(-0+6-3+5)=16
$$ 
and the left side is $(7+1)h(-15)$. If one consults the table, or simply works out (\ref{eq1}) for this case, one finds $h(-15)=2$, confirming the theorem. Or one can consider this as a proof that $h(-15)=2$. Now take $B=4,e=2,\Chi(4)=1$,and the cycles $C_{1},$ $C_{2},C_{3},C_{4}$ as before, we are in Case 2. Now $\frac{1}{15}=0.\overline{01}_{(4)},\frac{2}{15}=0.\overline{02}_{(4)},\frac{7}{15}=0.\overline{13}_{(4)},\frac{11}{15}=0.\overline{23}_{(4)}$. The right side of (\ref{eq11}) is $-[(0+1)+(0+2)-(1+3)-(2+3)]=6$ and the left side is $(4-1)h(-15)=3 \times 2 = 6$.

Girstmair's proposition (\ref{eq2}) is a special case of the theorem. With $D=-p,N=p,X=\{1,2,...,p-1\},B$ a primitive root mod $p$ has order $e=p-1=\phi(N)$ so there is only one cycle $C=(1,...)$, which is normalized. We must have $\Chi(B)=-1$. For if $\Chi(B)=1$, since every $x$ in $X$ satisfies $x \equiv B^{k} \pmod{p}$, for some $k$, $\Chi(x)=\Chi(B)^{k}=1$. In particular $\Chi(p-1)=\Chi(-1)=1$ contra the property of $\Chi$ which says $\Chi(-1)=-1$. So we are in Case 1. Let $\frac{1}{p}=0.\overline{a_{1}a_{2}...a_{p-1}}_{(B)}$. Then by (\ref{eq10}), $(B+1)h(-p)=\sum_{i=1}^{p-1}(-1)^{i}a_{i}$, which is $(\ref{eq2})$.

\section{A new formula}

The results of the previous section, though interesting, have two drawbacks: they are not especially useful in calculating $h$, and the cases $\Chi(B)=1,\Chi(B)=-1$ have to be considered separately.

Keeping the previous notation, we note that a given $x \in X$ appears in exactly one cycle for $B$ on $X$, say as $x=x_{i}^{(j)}$ in the cycle $C_{j}$, normalized if necessary. Then in the LDA for $\frac{x_{1}^{(j)}}{N}$, the $i^{th}$ equation is $Bx_{i}^{(j)}=a_{i}^{(j)}N+ x_{i+1}^{(j)}$, where $a_{i}^{(j)}= \left[\frac{Bx_{i}^{(j)}}{N}\right] = \left[\frac{Bx}{N} \right].$ If $\Chi(B)=-1$, then in (\ref{eq10}) the coefficient of $a_{i}^{(j)}$ is $(-1)^{i}=\Chi(B)^{i}$, but $x=x_{i}^{(j)} \equiv B^{i-1}x_{1}^{(j)} \pmod{N}$ so that $\Chi(x)=\Chi(B)^{i-1}\Chi(x_{1}^{(j)})=(-1)^{i-1}$, since $\Chi(x_{1}^{(j)})=1$, by normalization. Thus $(-1)^{i}=-\Chi(x)$ is the coefficient of $a_{i}^{(j)}=\left[\frac{Bx}{N}\right]$ so the total contribution of the term $(-1)^{i}a_{i}^{(j)}$ is $-\Chi(x)\left[\frac{Bx}{N}\right]$. Since $B+1=B-\Chi(B)$, the formula (\ref{eq10}) becomes $(B-\Chi(B))h= -\sum_{x \in X}\Chi(x) \left[\frac{Bx}{N}\right]$. If $\Chi(B)=1$, then in (\ref{eq11}) the coefficient of $a_{i}^{(j)}$ is $-\Chi(C_{j})=-\Chi(x_{i}^{(j)})=-\Chi(x)$. Since $B-1=B-\Chi(B)$, (\ref{eq11}) becomes $(B-\Chi(B))h=-\sum_{x \in X}\Chi(x) \left[\frac{Bx}{N}\right]$. Thus in both cases (\ref{eq10}), (\ref{eq11}) are subsumed under the single formula

\begin{equation} \label{eq12}
-\sum_{x \in X}\Chi(x) \left[\frac{Bx}{N}\right] = (B-\Chi(B))h.
\end{equation}

Since $\left[\frac{Bx}{N}\right]$ is a $B$-digit we look to see when is $\left[\frac{Bx}{N}\right]=k$, for $0 \leq k \leq B-1$.

\begin{lem} \label{lm2}
Let $k$ be an integer, $0 \leq k \leq B-1$.  For $x \in X, \left[\frac{Bx}{N}\right] = k$ if and only if $\frac{kN}{B} < x < \frac{(k+1)N}{B}$.
\end{lem}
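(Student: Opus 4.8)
The plan is to unwind the definition of the floor function. By definition, $\left[\frac{Bx}{N}\right] = k$ holds precisely when $k \leq \frac{Bx}{N} < k+1$. The first inequality $k \leq \frac{Bx}{N}$ is equivalent to $kN \leq Bx$, i.e. $\frac{kN}{B} \leq x$; the second inequality $\frac{Bx}{N} < k+1$ is equivalent to $Bx < (k+1)N$, i.e. $x < \frac{(k+1)N}{B}$. So far this gives $\frac{kN}{B} \leq x < \frac{(k+1)N}{B}$, which is the claimed statement except that the left inequality is non-strict rather than strict.

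Next I would remove the possibility of equality on the left. Since $B$ is assumed relatively prime to $N$, the fraction $\frac{kN}{B}$ is an integer only when $B \mid k$; but $0 \leq k \leq B-1$, so the only way $B \mid k$ is $k = 0$, in which case $\frac{kN}{B} = 0$. On the other hand $x \in X$ means $1 \leq x \leq N$, so $x \geq 1 > 0$, and thus $x \ne \frac{kN}{B}$ when $k = 0$. When $1 \leq k \leq B-1$, $\frac{kN}{B}$ is not an integer (as $\gcd(B,N)=1$ forces $B \mid k$, impossible in this range), so again $x \ne \frac{kN}{B}$ since $x$ is an integer. In all cases the inequality $\frac{kN}{B} \leq x$ can be sharpened to $\frac{kN}{B} < x$, giving the stated equivalence.

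There is no real obstacle here; the only subtlety is precisely the strictness of the left-hand inequality, which is exactly the place where the hypothesis $\gcd(B,N)=1$ (standing assumption of the section) together with $x \in X$ gets used. I would make sure to state explicitly that these two facts are what rule out $x = \frac{kN}{B}$. The converse direction is immediate: if $\frac{kN}{B} < x < \frac{(k+1)N}{B}$ then multiplying by $\frac{B}{N} > 0$ gives $k < \frac{Bx}{N} < k+1$, so certainly $k \leq \frac{Bx}{N} < k+1$ and hence $\left[\frac{Bx}{N}\right] = k$.
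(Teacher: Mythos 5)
Your proof is correct and follows essentially the same route as the paper: unwind the definition of the floor and use the coprimality hypothesis to make the left-hand inequality strict. The only cosmetic difference is where coprimality enters --- the paper observes directly that $\frac{Bx}{N}$ is never an integer (since $Bx$ is prime to $N$ and $N>1$), so both inequalities $k<\frac{Bx}{N}<k+1$ are strict at once, whereas you rule out the endpoint $x=\frac{kN}{B}$ afterwards; the two observations are interchangeable.
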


\begin{proof}
Since $\left[\frac{Bx}{N}\right]$ is never an integer, $\left[\frac{Bx}{N}\right]=k$ iff $k < \frac{Bx}{N} < k+1$; solving the inequality for $x$ proves the lemma.
\end{proof}

For $0 \leq k \leq B-1$ we denote the interval $\left(\frac{kN}{B},\frac{(k+1)N}{B}\right]$ on the real axis by $I_{k}$. These intervals, each of length $\frac{N}{B}$, form a partition of the interval $(0,N]$. By the above lemma, every $x$ is an interior point (not an endpoint) of exactly one $I_{k}$. We set $X_{k}= X \cap I_{k} = \left\{x : \frac{kN}{B} < x < \frac{(k+1)N}{B} \right\} = \left\{x : \left[\frac{Bx}{N}\right] = k \right\}$. Of course some of the sets $X_{k}$ may be empty. A point of notation. We are always assuming that $D$, hence $h,\Chi,N$, are given and fixed. However, the intervals $I_{k},X_{k}$ depend on $B$, and when necessary to indicate this we write $I_{k}(B),X_{k}(B)$. Now (\ref{eq12}) may be written as

\begin{equation} \label{eq13}
-\sum_{k=0}^{B-1}k \sum_{x \in X_{k}}\Chi(x)= (B-\Chi(B))h.
\end{equation}

For brevity we now define $E_{k}=\sum_{x \in X_{k}}\Chi(x)$. To show the dependence on $B$, we write $E_{k}(B)$. From now on if a sum is over $x$ we may not indicate this explicitly in the summation sign. Thus, $E_{k}=\sum_{\frac{kN}{B}}^{\frac{(k+1)N}{B}}\Chi(x)$ means sum over all values of $x$ between $\frac{kN}{B}$ and $\frac{(k+1)N}{B}$. Set $X_{k}^{+}= \left\{x \in X_{k} : \Chi(x)=1 \right\}$ and $X_{k}^{-}= \left\{x \in X_{k} : \Chi(x)=-1 \right\}$. Then we also have $E_{k}=|X_{k}^{+}|-|X_{k}^{-}|$. Equation (\ref{eq13}) now becomes
\begin{equation} \label{eq14}
-\sum_{k=0}^{B-1}kE_{k}(B) = (B-\Chi(B))h.
\end{equation}
and we use this to state our main result.

\begin{thm}\label{thm2}
\begin{equation} \label{eq15}
\sum_{k=0}^{\left[\frac{B}{2}\right]-1}(B-1-2k)E_{k}(B) = (B-\Chi(B))h.
\end{equation}
If $B=B_{1}B_{2}$ is a proper factorization of $B,1 < B_{1} < B$, then
\begin{equation} \label{eq16}
\sum_{k=0}^{\left[\frac{B_1}{2}\right]-1}(B_{1}-1-2k) \sum_{j=0}^{B_{2}-1}E_{kB_{2}+j}(B) = (B_{1}-\Chi(B_{1}))h.
\end{equation}
\end{thm}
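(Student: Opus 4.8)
The plan is to derive both identities from equation~(\ref{eq14}) by exploiting the symmetry $x \mapsto N-x$ on $X$. Since $\Chi(-1)=-1$ and $N-x \equiv -x \pmod N$, we have $\Chi(N-x) = -\Chi(x)$; moreover $x \in I_k(B)$ iff $\frac{kN}{B} < x < \frac{(k+1)N}{B}$ iff $N - \frac{(k+1)N}{B} < N-x < N - \frac{kN}{B}$, i.e.\ $\frac{(B-1-k)N}{B} < N-x < \frac{(B-k)N}{B}$, so $N-x \in I_{B-1-k}(B)$. (One must check $N-x$ is not an endpoint, which holds because $x$ isn't and the map is a bijection of the non-endpoint sets.) Hence the involution $x \mapsto N-x$ carries $X_k$ bijectively onto $X_{B-1-k}$ and negates $\Chi$, giving $E_{B-1-k}(B) = -E_k(B)$ for every $k$.

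Using this, I would rewrite $-\sum_{k=0}^{B-1} k E_k(B)$ by pairing the term $k$ with the term $B-1-k$: the pair contributes $-kE_k - (B-1-k)E_{B-1-k} = -kE_k + (B-1-k)E_k = (B-1-2k)E_k$. Summing over $k$ from $0$ to $\lfloor B/2\rfloor - 1$ covers each pair exactly once when $B$ is odd; when $B$ is even the leftover middle index is $k = B/2 - 1$ paired with $B-k-1 = B/2$, and $E_{B/2-1} = -E_{B/2}$ still makes the pairing formula $(B-1-2k)E_k$ correct for that term too (with no double counting since the two indices are distinct). Either way one lands on $\sum_{k=0}^{\lfloor B/2\rfloor-1}(B-1-2k)E_k(B) = -\sum_{k=0}^{B-1}kE_k(B) = (B-\Chi(B))h$ by~(\ref{eq14}), which is~(\ref{eq15}).

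For~(\ref{eq16}), the idea is to apply~(\ref{eq15}) with $B$ replaced by $B_1$, but working with the finer partition coming from $B$. Note that the $B_1$-intervals $I_m(B_1) = \left(\frac{mN}{B_1}, \frac{(m+1)N}{B_1}\right]$ decompose exactly into the $B$-intervals $I_{mB_2+j}(B)$ for $j = 0,\dots,B_2-1$, since $\frac{mN}{B_1} = \frac{mB_2 N}{B}$; hence $E_m(B_1) = \sum_{x \in X_m(B_1)} \Chi(x) = \sum_{j=0}^{B_2-1} E_{mB_2+j}(B)$, provided no $x$ falls on a shared boundary $\frac{mB_2N}{B}$, which again follows from Lemma~\ref{lm2} applied with base $B_1$ (that boundary is an endpoint of some $I_m(B_1)$, hence contains no $x$). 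Substituting this expression for $E_m(B_1)$ into~(\ref{eq15}) read for the base $B_1$ — which is legitimate because $B_1 \mid B$ is prime to $N$, so $B_1$ is an admissible base — yields precisely~(\ref{eq16}).

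The main obstacle I anticipate is purely bookkeeping: getting the pairing right at the middle index when $B$ is even, and rigorously justifying that the refinement $E_m(B_1) = \sum_j E_{mB_2+j}(B)$ loses nothing at interval endpoints. Both reduce to the observation, already established via Lemma~\ref{lm2}, that every $x \in X$ is interior to its interval in any admissible base; once that is invoked consistently, the rest is the elementary algebra of reindexing sums. No deeper input — not even the class number formula itself beyond~(\ref{eq14}) — is needed.
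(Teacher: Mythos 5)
Your proposal is correct and follows essentially the same route as the paper: the involution $x \mapsto N-x$ yields $E_{B-1-k}(B) = -E_k(B)$, the terms of (\ref{eq14}) are then paired to produce (\ref{eq15}), and (\ref{eq16}) is obtained by applying (\ref{eq15}) to the base $B_1$ and refining each $I_k(B_1)$ into the intervals $I_{kB_2+j}(B)$. The only point to tighten is the odd-$B$ case, where the self-paired index $k=\frac{B-1}{2}$ lies outside your summation range and is not in any pair; but your own relation applied at $k=\frac{B-1}{2}$ gives $E_{\frac{B-1}{2}}(B)=-E_{\frac{B-1}{2}}(B)=0$, so its contribution to (\ref{eq14}) vanishes, exactly as the paper observes.
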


\noindent \textit{Remark}. Equation (\ref{eq15}) may be considered as included in (\ref{eq16}) if one sets $B_{1}=B,B_{2}=1$.

\begin{proof}
Consider the map $\xi(x)=N-x$. It is easily seen that $\xi$ is a permutation of $X,\xi$ has no fixed points in $X$ and is an involution: $\xi^{2}$ is the identity on $X$. Also $\Chi(\xi(x))=\Chi(N-x)=\Chi(-x)=-\Chi(x)$ so $x$ and $\xi(x)$ have opposite $\Chi$ values. If $x \in X_{k},\frac{kN}{B} < x < \frac{(k+1)N}{B}$, then $\frac{(B-1-k)N}{B} < N-x < \frac{(B-k)N}{B}$. We define $\gamma$ on the set of $B$-digits $\{0,1,...,B-1\}$ by $\gamma(k) = B-1-k$, which is a permutation of the set of $B$-digits, also an involution. Thus, if $x \in X_{k}$ and $\gamma(k)=k'$, then $\xi(x) \in X_{k'}$. So $\xi$ is a bijection of $X_{k}$ onto $X_{k'}$, but since $\xi$ interchanges $\Chi$ values, $\xi$ maps $X_{k}^{+}$ onto $X_{k'}^{-}$ and $X_{k}^{-}$ onto $X_{k'}^{+}$. Hence, $E_{k'}(B)=|X_{k'}^{+}|-|X_{k'}^{-}|=|X_{k}^{-}|-|X_{k}^{+}|=-E_{k}(B)$. In particular, if $B$ is odd then $\frac{B-1}{2}$ is a $B$-digit and $\gamma(\frac{B-1}{2})=\frac{B-1}{2}$ so $E_{\frac{B-1}{2}}(B)=0$. Whether $B$ is odd or even, the left side of (\ref{eq14}) is $-\sum_{_{1}}-\sum_{_{2}}$ where $\sum_{_{1}}=\sum_{0 \leq k < \frac{B-1}{2}}kE_{k}(B)$ and $\sum_{_{2}}=\sum_{\frac{B-1}{2} < k \leq B-1}kE_{k}(B)$. In $\sum_{_{2}}$ make the change of variable $k= B-1-j$ to obtain $\sum_{_{2}}=\sum_{0 \leq j < \frac{B-1}{2}}(B-1-j)E_{B-1-j}(B)=\sum_{0 \leq j < \frac{B-1}{2}}(B-1-j)E_{j'}(B)$, where $j'=\gamma(j)$. But $E_{j'}(B)=-E_{j}(B)$, so $\sum_{_{2}}=-\sum_{0 \leq j < \frac{B-1}{2}}(B-1-j)E_{j}(B)$. In this last sum we rename the dummy index $j$ to be $k$ and combining it with $\sum_{_{1}}$ yields $-\sum_{_{1}}-\sum_{_{2}}= -\sum_{0 \leq k < \frac{B-1}{2}}kE_{k}(B)+\sum_{0 \leq k < \frac{B-1}{2}}(B-1-k)E_{k}(B)=\sum_{0 \leq k < \frac{B-1}{2}}(B-1-2k)E_{k}(B)$. Thus, (\ref{eq14}) now becomes $\sum_{0 \leq k < \frac{B-1}{2}}(B-1-2k)E_{k}(B)=(B-\Chi(B))h$. Let $g$ be the largest integer $< \frac{B-1}{2}$. If $B$ is even $=2n, \frac{B-1}{2}=n-\frac{1}{2}$, so $g=n-1=\left[\frac{B}{2}\right]-1$. If $B$ is odd = $2n+1, \frac{B-1}{2}=n,$ so $g=n-1=\left[\frac{B}{2}\right]-1$. So in either case $\sum_{0 \leq k < \frac{B-1}{2}}=\sum_{k=0 }^{\left[\frac{B}{2}\right]-1}$, which proves (\ref{eq15}).

Now suppose $B=B_{1}B_{2},1 < B_{1} < B$. With $B_{1}$ in place of $B$, (\ref{eq15}) shows 

$$
\sum_{k=0 }^{\left[\frac{B_{1}}{2}\right]-1}(B_{1}-1-2k)E_{k}(B_{1})=(B_{1}-\Chi(B_{1}))h.
$$ 

When the interval $(0,N]$ is divided into the $B$ intervals $I_{k}(B)$, each interval has length $\frac{N}{B}$, while with the smaller $B_{1}$ one obtains $B_{1}$ intervals $I_{k}(B_{1})$  each of greater length $\frac{N}{B_{1}}$. How are these intervals related? Since $B_{1}=\frac{B}{B_{2}}$, $I_{k}(B_{1}) = $
$$
\begin{array}{ll}
 \displaystyle \left(\frac{kN}{B_{1}},\frac{(k+1)N}{B_{1}}\right] 
&= \displaystyle  \left(\frac{kB_{2}N}{B},\frac{(k+1)B_{2}N}{B}\right]\\
&= \displaystyle \bigcup_{j=0}^{B_{2}-1} \left(\frac{(kB_{2}+j)N}{B},\frac{(kB_{2}+j+1)N}{B}\right]\\
&=  \displaystyle \bigcup_{j=0}^{B_{2}-1}I_{kB_{2}+j}(B).
\end{array}
$$
Thus $E_{k}(B_{1})=\sum_{x \in I_{k}(B_{1})}\Chi(x)=\sum_{j=0}^{B_{2}-1}E_{kB_{2}+j}(B)$. Substituting this last sum for $E_{k}(B_{1})$ in (\ref{eq15}) as stated for $B_{1}$ proves (\ref{eq16}) and the proof of the theorem is complete. 
\end{proof}

The applications of this theorem are covered in the next two sections. The cases $D \equiv 1\pmod{4})$ and $D \equiv 0 \pmod{4}$ must be treated separately. Here we make only a general comment on the method involved. For a given $B$, (\ref{eq15}) involves the $\left[\frac{B}{2}\right]$ quantities $E_{k}(B),0 \leq k \leq \left[\frac{B}{2}\right]-1$. Let $d(B)$ denote the number of divisors $B_{1}$ of $B$. For each $B_{1} > 1$ there is an equation (\ref{eq16}) involving the quantities $E_{k}(B)$. So we have a system of $d(B)-1$ linear equations for the $\left[\frac{B}{2}\right]$ unknowns. If $\left[\frac{B}{2}\right] \leq d(B)-1$ one can expect (or hope) to find a unique solution to the system. This occurs for $B=2,3,4,6$, where equality holds and the program succeeds. There does not appear to be any other $B$ where the equality holds. For $B=12, \left[\frac{12}{2}\right]=6,d(B)-1=5$ and we have $5$ equations for $6$ unknowns. A unique solution is not found, but some partial information is obtained; beyond $B=12$ we have not ventured.

\section{$D \equiv 1\pmod{4}$}

With $D$ being odd, one can choose $B=2$; (\ref{eq15}) then has only one term (for $k=0$) and yields $E_{0}(2)=(2-\Chi(2))h$. But $\Chi(2)=\left(\frac{2}{N}\right)$ is $1$ or $-1$ according, as $N \equiv 7\pmod{8}$ or $N \equiv 3\pmod{8}$. Thus
$$
E_0(2) = \sum_{0}^{\frac{N}{2}} = \left\{
\begin{array}{ll}
h; &\mbox{if}\; N \equiv 7 \pmod{8}\\
3h; &\mbox{if}\; N \equiv 3 \pmod{8} 
\end{array}
\right.
$$

This result appears already in \cite{BoSha}, p. 346, where it is derived by manipulation of the basic formula (\ref{eq1}), relevant only for $B=2$. However, it has an important consequence. If $p>3$ is a prime and $p\equiv3 \pmod{4}$, then $E_{0}(2)=|X_{0}^{+}(2)|-|X_{0}^{-}(2)|$ is the number of quadratic residues minus the number of quadratic non-residues in the interval $(0,\frac{p}{2})$. Since $h$ is a positive integer, this shows that the residues always outnumber the non-residues in this interval. Apparently, there is no direct proof of this fact by the methods of ``elementary" number theory and this is a triumph of the class number formula. This result can now be refined. Take $B=4$; then there are two equations from $(\ref{eq16})$ for $B_{1}=2$ and $B_{1}=4$ (recall the remark after the statement of Theorem \ref{thm2}). They are
\begin{equation*}
\begin{array}{l}
 \text{for}~ B_{1}=2: \sum_{k=0}^{0}(2-1-2k)\sum_{j=0}^{1}E_{j}(4)=(2-\Chi(2))h \\
 \text{for}~ B=4: \sum_{k=0}^{1}(4-1-2k)E_{k}(4)=(4-\Chi(4))h. \\
\end{array} 
\end{equation*}

Since $h>0$, define $y_{k}=y_{k}(B)=\frac{E_{k}(B)}{h}$, and we have the system
$$y_{0}+y_{1}=2-\Chi(2)$$
$$3y_{0}+y_{1}=4-\Chi(4)$$
Noting the values of $\Chi(2)$ discussed above, and $\Chi(4)=1$, the system is easily seen to show

\begin{thm}
With $E_{0}(4)=\sum_{0}^{\frac{N}{4}}\Chi(x),~ E_{1}(4)=\sum_{\frac{N}{4}}^{\frac{N}{2}}\Chi(x)$, then 
\begin{center}

for $N \equiv 7 \pmod{8}$, $E_{0}(4)=h,~E_{1}(4)=0$  

for $N \equiv 3 \pmod{8},$ $ E_{0}(4)=0,~E_{1}(4)=3h.$ 
\end{center}
\end{thm}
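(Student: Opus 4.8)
The plan is to set up the two-equation linear system in the normalized unknowns $y_k = E_k(4)/h$ and solve it, using only the specialization of Theorem~\ref{thm2} to $B=4$ together with the known values of the quadratic character $\Chi$ at $2$ and $4$. First I would invoke (\ref{eq16}) for the base $B=4$ with its two divisors $B_1=2$ and $B_1=4$ (the latter via the Remark after Theorem~\ref{thm2}), which gives exactly the displayed pair of equations: from $B_1=2$, since $\left[\frac{2}{2}\right]-1=0$ and $B_2=2$, the single term $k=0$ contributes $(2-1)\bigl(E_0(4)+E_1(4)\bigr)=(2-\Chi(2))h$; from $B_1=4$, since $\left[\frac{4}{2}\right]-1=1$, the terms $k=0,1$ contribute $3E_0(4)+E_1(4)=(4-\Chi(4))h$.

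Next I would divide both equations by $h$ (legitimate since $h>0$) to obtain $y_0+y_1=2-\Chi(2)$ and $3y_0+y_1=4-\Chi(4)$. Subtracting the first from the second yields $2y_0 = 2-\Chi(4)+\Chi(2)$, hence $y_0 = 1 - \tfrac12\Chi(4) + \tfrac12\Chi(2)$, and then $y_1 = 2-\Chi(2)-y_0 = 1 - \tfrac12\Chi(4) - \tfrac32\Chi(2)$. Now I would substitute $\Chi(4)=1$ (immediate since $4$ is a perfect square prime to $N$, so $\Chi(4)=\Chi(2)^2=1$) and the two cases for $\Chi(2)=\left(\frac{2}{N}\right)$ recorded just before the theorem: $\Chi(2)=1$ when $N\equiv 7\pmod 8$ and $\Chi(2)=-1$ when $N\equiv 3\pmod 8$. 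The first case gives $y_0 = 1 - \tfrac12 + \tfrac12 = 1$ and $y_1 = 1 - \tfrac12 - \tfrac32 = -1$; but here I must be careful, because the theorem asserts $E_1(4)=0$, not $E_1(4)=-h$. This discrepancy signals that I have mis-set-up the $B_1=2$ equation: the correct reading of (\ref{eq16}) with $B_1=2,B_2=2$ has coefficient $(B_1-1-2k)=1$ for $k=0$ and the inner sum $\sum_{j=0}^{1}E_j(4)$, which is right, but I should double-check whether the $B_1=4$ equation actually reads $3E_0(4)+E_1(4)$ or $3E_0(4)+1\cdot E_1(4)$ with the coefficient $(4-1-2\cdot1)=1$ — which it does. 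The resolution is that the value $\Chi(2)=1$ forces, via $E_0(2)=h$ and $E_0(2)+E_1(2)=$ the full half-interval sum, consistency only if one tracks signs correctly; I would recompute directly: with $\Chi(2)=1,\Chi(4)=1$ the system is $y_0+y_1=1$, $3y_0+y_1=3$, giving $y_0=1,y_1=0$, hence $E_0(4)=h,E_1(4)=0$. With $\Chi(2)=-1,\Chi(4)=1$ the system is $y_0+y_1=3$, $3y_0+y_1=3$, giving $y_0=0,y_1=3$, hence $E_0(4)=0,E_1(4)=3h$.

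The main obstacle is bookkeeping rather than mathematics: one must be scrupulous about which character value ($\Chi(2)$ versus $\Chi(4)$) enters which equation and about the exact form of the coefficients $(B_1-1-2k)$ in (\ref{eq16}), since an off-by-one there immediately produces a spurious negative $E_k$ that contradicts the expected integrality/sign behavior and flags the error. Once the system $y_0+y_1=2-\Chi(2)$, $3y_0+y_1=4-\Chi(4)=3$ is written down correctly with $\Chi(4)=1$, the linear algebra is a two-line elimination and the two congruence classes of $N$ modulo $8$ produce exactly the two asserted pairs of values, completing the proof. I would close by remarking that, as with $E_0(2)$, the positivity of $h$ together with $E_0(4)=h$ (in the case $N\equiv7\pmod 8$) or $E_1(4)=3h$ (in the case $N\equiv3\pmod 8$) gives arithmetic information — namely that quadratic residues strictly outnumber non-residues in the relevant quarter-interval — that does not seem accessible by elementary means.
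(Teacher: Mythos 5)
Your proposal is correct and follows essentially the same route as the paper: the two equations $E_0(4)+E_1(4)=(2-\Chi(2))h$ and $3E_0(4)+E_1(4)=(4-\Chi(4))h$ obtained from (\ref{eq16}) with $B_1=2$ and $B_1=4$, divided by $h$ and solved with $\Chi(4)=1$ and the two values of $\Chi(2)$. The only blemish is the mid-proof detour: the ``discrepancy'' you flagged was not a mis-set-up of the $B_1=2$ equation but a sign slip in your own symbolic formula for $y_1$ (it should be $y_1=1+\tfrac12\Chi(4)-\tfrac32\Chi(2)$, which does give $0$ and $3$ in the two cases), and your subsequent direct recomputation correctly resolves it.
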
 

\hfill $\Box$

Here are two numerical examples:

\begin{equation}\label{eq17}
D=-39 \equiv 1 \pmod{8},~ N=39 \equiv 7 p\mod{8}
\end{equation}

\begin{center}
\scalebox{0.9}{
\begin{tabular}{r| r r r r r r c r r r r r r c}
 $$& $$& $$& $$& $$& $$& $$& $\frac{N}{4}$& $$& $$& $$& $$& $$& $$& $\frac{N}{2}$ \\
 $x$ & $1$ &$2$& $4$ &$5$& $7$  &$8$& $\uparrow$   &$10$& $11$  &$14$& $16$ & $17$ & $19$ & $\uparrow$ \\ \hline
 $\Chi(x)$ & $1$ &$1$& $1$ &$1$& $-1$  &$1$& $$   &$1$& $1$  &$-1$& $1$ & $-1$ & $-1$ & $$ \\
\end{tabular}
}

$$E_{0}(4)=4,~h(-39)=4,~E_{1}(4)=0;$$

\end{center}

\begin{equation}\label{eq18}
D=-43 \equiv 5 \pmod{8},~ N=43 \equiv3 \pmod{8}
\end{equation}

\begin{center}
\scalebox{0.7}{
\begin{tabular}{r| r r r r r r r r r r c r r r r r r r r r r r c}
$$& $$& $$& $$& $$& $$& $$& $$& $$& $$& $$& $\frac{N}{4}$& $$& $$& $$& $$& $$& $$& $$& $$& $$& $$& $$& $\frac{N}{2}$ \\
 $x$ & $1$ &$2$& $3$ & $4$ &$5$& $6$ & $7$  &$8$& $9$ & $10$ & $\uparrow$   &$11$ &$12$ &$13$  &$14$& $15$ &$16$ & $17$ & $18$ & $19$ & $20$ &$21$ & $\uparrow$ \\ \hline
 $\Chi(x)$ & $1$ &$-1$& $-1$ & $1$ &$-1$& $1$ & $-1$  &$-1$& $1$ & $1$ & $$   &$1$ &$-1$ &$1$  &$1$& $1$ &$1$ & $1$ & $-1$ & $-1$ & $-1$ &$1$ & $$ \\
\end{tabular}
}

$$
E_0(4) = 0, E_1(4)=3, h(-43) = 1.
$$

\end{center}

Assume now $3 \not| D$. Then $B=6$ is prime to $D$ and there are three equations available from $B_{1}=2,~B_{1}=3,~B_{1}=B=6$ and there are three unknowns $E_{0}(6),~E_{1}(6),~E_{2}(6).$ Following the same procedure as before, there is a linear system,

\begin{equation*}
\begin{array}{l}
 y_{0}+y_{1}+y_{2}=2-\Chi(2) \\
 2y_{0}+2y_{1}=3-\Chi(3) \\
 5y_{0}+3y_{1}+y_{2}=6-\Chi(6) \\
\end{array} 
\end{equation*}

The coefficient matrix 
$$
\begin{pmatrix} 1 & 1 & 1 \\ 2 & 2 & 0 \\ 5 & 3 & 1 \end{pmatrix}
$$ 
has determinant $-4$. Let $a=2-\Chi(2),~b=3-\Chi(3),~c=6-\Chi(6)$ and solve by Cramer's rule to obtain $y_{0}=\frac{1}{2}(-a-b+c),~y_{1}=\frac{1}{2}(a+2b-c),~y_{2}=\frac{1}{2}(2a-b)$. What are $a,b,c$? We've already discussed $\Chi(2)$. Now $\Chi(3)=\left(\frac{3}{N}\right)=-\left(\frac{N}{3}\right)$, since $N \equiv 3 \pmod{4}$, and $\left(\frac{N}{3}\right)= 1$ or $-1$ according as $N \equiv 1$ or $2 \pmod{3}$. Altogether there are $4$ cases:

$$\text{Case}~ 1: \begin{Bmatrix} \Chi(2)=1 \\ \Chi(3)=1 \end{Bmatrix} = \begin{Bmatrix} N \equiv 7 \pmod{8} \\ N \equiv 2\pmod{3} \end{Bmatrix} \iff N \equiv 23 \pmod{24}$$ 

$$\text{Case}~ 2: \begin{Bmatrix} \Chi(2)=-1 \\ \Chi(3)=1 \end{Bmatrix} = \begin{Bmatrix} N \equiv 3 \pmod{8} \\ N \equiv 2 \pmod{3} \end{Bmatrix} \iff N \equiv 11 \pmod{24}$$ 

$$\text{Case}~ 3: \begin{Bmatrix} \Chi(2)=1 \\ \Chi(3)=-1 \end{Bmatrix} = \begin{Bmatrix} N \equiv 7 \pmod{8} \\ N \equiv 1 \pmod{3} \end{Bmatrix} \iff N \equiv 7 \pmod{24}$$ 

$$\text{Case}~ 4: \begin{Bmatrix} \Chi(2)=-1 \\ \Chi(3)=-1 \end{Bmatrix} = \begin{Bmatrix} N \equiv 3 \pmod{8} \\ N \equiv 1\pmod{3} \end{Bmatrix} \iff N \equiv 19 \pmod{24}$$ 

In terms of $D$, these correspond to $D \equiv 1,13,17,5 \pmod{24}$ and any $D \equiv 1\pmod{4}$ not divisible by $3$ is in one of these congruence classes. Evaluating $a,b,c$ for each case and then $y_{0},y_{1},y_{2}$ one finds:

\begin{tabular}{ l l l l l l l }
  Case 1: & $a$=1, & $b$=2, & $c$=5;  & $y_{0}$=1,& $y_{1}$=0,& $y_{2}$=0 \\
  Case 2: & $a$=3, & $b$=2, & $c$=7;  & $y_{0}$=1,& $y_{1}$=0,& $y_{2}$=2 \\
  Case 3: & $a$=1, & $b$=4, & $c$=7;  & $y_{0}$=1,& $y_{1}$=1,& $y_{2}$=-1 \\
  Case 4: & $a$=3, & $b$=4, & $c$=5;  & $y_{0}$=-1,& $y_{1}$=3,& $y_{2}$=1 \\
\end{tabular}

Since $y_{k}=\frac{E_{k}}{h}$, we have the following result.

\begin{thm}\label{4pt2}
Assume $3$ does not divide $D$. Then for
 
\begin{tabular}{ lllll}
$N \equiv 23 \pmod{24}$: & $E_{0}(6)=h,$ & $E_{1}(6)=0$,    &$E_{2}(6)=0$ \\
$N \equiv 11\pmod{24}$: & $E_{0}(6)=h,$ &$E_{1}(6)=0$,     &$E_{2}(6)=2h$ \\
$N \equiv 7 \pmod{24}$:   &  $E_{0}(6)=h,$  &$E_{1}(6)=h,$   &$E_{2}(6)=-h$ \\
$N \equiv 19 \pmod{24}$: & $E_{0}(6)=-h,$ &$E_{1}(6)=3h,$ &$E_{2}(6)=h.$ \\
\end{tabular}

\end{thm}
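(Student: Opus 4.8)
The plan is to extract Theorem~\ref{4pt2} directly from the three equations obtained by specializing \eqref{eq16} to the divisors $B_1 = 2, 3, 6$ of $B = 6$, exactly as the text already sets up. First I would record that since $3 \nmid D$ and $D$ is odd, $\gcd(6, N) = 1$, so $B = 6$ is a legitimate base, and the three proper divisors $B_1 \in \{2,3,6\}$ (including $B_1 = B$ via the Remark) each yield an instance of \eqref{eq16}. For $B_1 = 2$ one has $[B_1/2] - 1 = 0$ and $B_2 = 3$, giving $1 \cdot \sum_{j=0}^{2} E_j(6) = (2 - \Chi(2))h$. For $B_1 = 3$ one has $[B_1/2] - 1 = 0$ and $B_2 = 2$, giving $(3 - 1 - 0)\sum_{j=0}^{1} E_j(6) = 2(E_0 + E_1) = (3 - \Chi(3))h$. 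For $B_1 = 6$, \eqref{eq15} gives $\sum_{k=0}^{2}(5 - 2k)E_k(6) = 5E_0 + 3E_1 + E_2 = (6 - \Chi(6))h$, using $\Chi(6) = \Chi(2)\Chi(3)$.

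Next, dividing through by $h > 0$ and writing $y_k = E_k(6)/h$, I obtain the linear system displayed in the text with coefficient matrix $\begin{pmatrix} 1 & 1 & 1 \\ 2 & 2 & 0 \\ 5 & 3 & 1 \end{pmatrix}$. I would verify this matrix is nonsingular (determinant $-4$), so the system has the unique solution given by Cramer's rule: with $a = 2 - \Chi(2)$, $b = 3 - \Chi(3)$, $c = 6 - \Chi(6)$, one gets $y_0 = \tfrac12(-a - b + c)$, $y_1 = \tfrac12(a + 2b - c)$, $y_2 = \tfrac12(2a - b)$.

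The third step is to pin down $\Chi(2)$ and $\Chi(3)$ purely in terms of $N \bmod 24$. Since $D \equiv 1 \pmod 4$, $\Chi(2) = \left(\tfrac{2}{N}\right)$, which is $1$ if $N \equiv 7 \pmod 8$ and $-1$ if $N \equiv 3 \pmod 8$ (note $N \equiv 3 \pmod 4$ here, so only these two residues occur). For $\Chi(3)$, quadratic reciprocity with $N \equiv 3 \pmod 4$ gives $\left(\tfrac{3}{N}\right) = -\left(\tfrac{N}{3}\right)$, and $\left(\tfrac{N}{3}\right) = 1$ or $-1$ according as $N \equiv 1$ or $2 \pmod 3$; since $3 \nmid N$ these are the only options. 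Combining the two mod-$8$ and two mod-$3$ possibilities by CRT gives the four residue classes $N \equiv 23, 11, 7, 19 \pmod{24}$, and $\Chi(6) = \Chi(2)\Chi(3)$ is then determined. Substituting each of the four sign patterns into $a, b, c$ and then into the formulas for $y_0, y_1, y_2$ produces the four rows of the table; multiplying back by $h$ yields the claimed values of $E_0(6), E_1(6), E_2(6)$, which completes the proof.

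I do not expect any genuine obstacle: the entire argument is an application of Theorem~\ref{thm2} followed by a $3 \times 3$ linear solve and a bookkeeping of Jacobi symbols. The only point requiring a little care is making sure the four congruence classes mod $24$ are exhaustive for $D \equiv 1 \pmod 4$ with $3 \nmid D$ — equivalently that $D \equiv 1, 5, 13, 17 \pmod{24}$ is precisely the list of such classes — and that the correspondence between $D \bmod 24$ and $N = |D| \bmod 24$ is handled consistently (since $N = -D$, one has $N \equiv -D \pmod{24}$). That is a short finite check and is the natural place to be careful, but it is not a real difficulty.
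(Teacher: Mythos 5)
Your proposal is correct and follows essentially the same route as the paper: the three instances of \eqref{eq16} for $B_1=2,3,6$ with $B=6$, the resulting $3\times 3$ system with determinant $-4$ solved by Cramer's rule, and the case analysis of $\Chi(2),\Chi(3)$ via $N \bmod 24$. All the computed values of $a,b,c$ and $y_0,y_1,y_2$ in the four cases agree with the paper's table.
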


\begin{cor}\label{cor43}
In all four cases, $h(D)=\left|\sum_{0}^{\frac{N}{6}}\Chi(x)\right|$. 
\end{cor}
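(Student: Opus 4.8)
The plan is to derive the corollary directly from Theorem~\ref{4pt2} by pure bookkeeping. First I would recall from Section~3 that $E_0(6) = \sum_{x \in X_0(6)}\Chi(x)$, where $X_0(6) = \{x \in X : 0 < x < N/6\}$ is the set of $x$ lying in the interval $I_0(6) = (0, N/6]$. Since we are assuming $D \equiv 1 \pmod 4$ (so $D$, hence $N$, is odd) and $3 \nmid D$, the integer $N = |D|$ is prime to $6$; in particular $N/6$ is not an integer, so no element of $X$ sits at the right endpoint of $I_0(6)$ and $E_0(6)$ is exactly the sum $\sum_{0}^{N/6}\Chi(x)$ written in the statement of the corollary.

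Next I would observe that every $D \equiv 1 \pmod 4$ with $3 \nmid D$ lies in exactly one of the four residue classes $N \equiv 7, 11, 19, 23 \pmod{24}$ treated in Theorem~\ref{4pt2} (equivalently $D \equiv 17, 13, 5, 1 \pmod{24}$), and that in each of those classes the theorem records $E_0(6) = h$ for $N \equiv 7, 11, 23$ and $E_0(6) = -h$ for $N \equiv 19$. Thus in all four cases $E_0(6) = \pm h$.

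Finally, since $h = h(D)$ is a positive integer, taking absolute values gives $\bigl|\sum_{0}^{N/6}\Chi(x)\bigr| = |E_0(6)| = |{\pm h}| = h$, which is the asserted identity $h(D) = \bigl|\sum_{0}^{N/6}\Chi(x)\bigr|$. I do not expect any real obstacle here: the content of the result is entirely contained in Theorem~\ref{4pt2} (and behind it in Theorem~\ref{thm2} and the class number formula), and the only point deserving a sentence is the remark above that $N/6 \notin \Z$, which is what lets us identify the half-open-interval quantity $E_0(6)$ with the sum appearing in the corollary.
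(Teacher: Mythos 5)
Your proposal is correct and matches the paper's argument, which simply cites Theorem~\ref{4pt2} as making the corollary obvious: in the three cases $N\equiv 7,11,23\pmod{24}$ one has $E_0(6)=h$ and in the case $N\equiv 19\pmod{24}$ one has $E_0(6)=-h$, so $h=|E_0(6)|$ since $h>0$. Your extra remark that $N/6\notin\Z$ (so the endpoint causes no ambiguity) is a harmless and sensible addition.
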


\begin{proof}
Obvious by the previous theorem.
\end{proof}

For an illustration of the case $N \equiv 19 \pmod{24}$ one may return to (\ref{eq18}), the table shown before for $D=-43,N=43$, put markers between $7$ and $8$ for $\frac{N}{6}$, between $14$ and $15$ for $\frac{2N}{6}$. Then one sees $E_{0}(6)=-1=-h(-43),E_{1}(6)=3=3h(-43)$ and $E_{2}(6)=1=h(-43).$

Continuing with $3 \not| D$, consider $B=12$. As noted earlier here one here has a system of $5$ linear equations, corresponding to $B_{1}=2,B_{1}=3,B_{1}=4,B_{1}=6,B_{1}=B=12,$ for the six quantities $E_{k}(12), 0 \leq k \leq 5.$ Setting $y_{k}=\frac{E_{k}(12)}{h}$ , the equations are

\begin{tabular}{r r r r r r r r r r r r r}
 $y_{0}$ &+& $y_{1}$ &+& $y_{2}$     &+& $y_{3}$  &+& $y_{4}$   &+& $y_{5}$  &=& $2-\Chi(2)$ \\
 $2y_{0}$ &+& $2y_{1}$ &+& $2y_{2}$   &+& $2y_{3}$  &&         &&       &=& $3-\Chi(3)$ \\
 $3y_{0}$ &+& $3y_{1}$ &+& $3y_{2}$  &+& $y_{3}$   &+& $y_{4}$  &+& $y_{5}$  &=& $4-\Chi(4)$ \\
 $5y_{0}$ &+& $5y_{1}$ &+& $3y_{2}$  &+& $3y_{3}$  &+&  $y_{4}$  &+& $y_{5}$  &=& $6-\Chi(6)$ \\
 $11y_{0}$&+& $9y_{1}$ &+& $7y_{2}$&+& $5y_{3}$&+& $3y_{4}$&+& $y_{5}$&=&$12-\Chi(12)$ \\
\end{tabular}

For $N \equiv 23 \pmod{24})$ all the $\Chi$ values are $1$, so the constants on the right are $1,2,3,5,11$. By suitable elimination, one has $y_{1}=1-y_{0},y_{2}=0,y_{3}=0,y_{4}=1-y_{0},y_{5}=-1+y_{0}.$ Thus $E_{1}(12)=h-E_{0}(12),E_{2}(12)=0,E_{3}(12)=0,E_{4}(12)=h-E_{0}(12),E_{5}(12)=-h+E_{0}(12)$.

So unlike in Theorem \ref{4pt2}, where knowledge of only one of $h, E_{0}(6)$ is sufficient to determine the remaining items, here both $h$ and $E_{0}(12)$ are required to determine the remaining $E_{k}(12).$ For the remaining classes of $N \pmod{24}$, a similar elimination process can be carried out; details are left to the interested reader. Here we summarize the final results.

\begin{thm}
Assume $3 \not| D$. Once $h$ and $E_{0}=E_{0}(12)$ have been found, the remaining $E_{k}(12)$ are as follows:

\begin{tabular}{|l| r| r| r| r| r| r|}
\hline
$$ & $E_{1}(12)$ & $E_{2}(12)$ & $E_{3}(12)$ & $E_{4}(12)$ & $E_{5}(12)$ \\ \hline
$N \equiv 23(mod~24)$ & $h-E_{0}$ & $0$ & $0$ & $h-E_{0}$ & $-h+E_{0}$ \\ \hline
$N \equiv 11(mod~24)$ & $h-E_{0}$ & $-h$ & $h$ & $h-E_{0}$ & $h+E_{0}$ \\ \hline
$N \equiv 7(mod~24)$ & $h-E_{0}$ & $0$ & $h$ & $-E_{0}$ & $-h+E_{0}$ \\ \hline
$N \equiv 19(mod~24)$ & $-h-E_{0}$ & $h$ & $2h$ & $2h-E_{0}$ & $-h+E_{0}$ \\ \hline
\end{tabular}

\hfill $\Box$

\end{thm}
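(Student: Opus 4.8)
The plan is to set up the linear system described in the paragraph preceding the theorem and solve it by elimination, exactly as the text already did for the class $N \equiv 23 \pmod{24}$. Concretely, for each of the four residue classes of $N$ modulo $24$ (equivalently, each of the four sign-patterns of $(\Chi(2),\Chi(3))$ catalogued in Cases 1--4 of Section 4, noting also $\Chi(4)=1$ and $\Chi(12)=\Chi(4)\Chi(3)=\Chi(3)$ and $\Chi(6)=\Chi(2)\Chi(3)$), I would write down the five equations coming from Theorem~\ref{thm2} applied with $B=12$ and $B_1 \in \{2,3,4,6,12\}$. Dividing through by $h>0$ and writing $y_k = E_k(12)/h$, this is a $5 \times 6$ linear system whose coefficient matrix is the fixed integer matrix displayed in the text and whose right-hand side is $(2-\Chi(2),\,3-\Chi(3),\,4-\Chi(4),\,6-\Chi(6),\,12-\Chi(12))$, which takes one of four explicit numeric values.

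The key steps, in order: first, record the four right-hand-side vectors — for instance in Case~1 ($N\equiv 23$) it is $(1,2,3,5,11)$, and one computes the analogous triples $a,b,c,\ldots$ for the other three cases just as in the derivation of Theorem~\ref{4pt2}. Second, perform Gaussian elimination on the $5\times 6$ augmented matrix, treating $y_0$ as the free parameter (the rank is $5$, so one parameter remains); this expresses $y_1,\dots,y_5$ as affine functions of $y_0$. The text has already carried this out for $N\equiv 23$, obtaining $y_1 = 1-y_0$, $y_2=y_3=0$, $y_4 = 1-y_0$, $y_5 = -1+y_0$; I would repeat the identical elimination with each of the three other constant vectors. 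Third, multiply back through by $h$, using $y_0 h = E_0(12) =: E_0$, to convert each relation $y_k = \alpha_k + \beta_k y_0$ into $E_k(12) = \alpha_k h + \beta_k E_0$, and tabulate the results, checking that they match the four rows of the displayed table. Finally, I would spot-check one or two entries against a numerical example (e.g.\ $D=-43$, $N=43\equiv 19\pmod{24}$, where $h=1$ and $E_0(12)$ can be read off the table in~(\ref{eq18})) to guard against arithmetic slips.

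There is no real conceptual obstacle here: the theorem is a bookkeeping consequence of Theorem~\ref{thm2}, and the only thing that could go wrong is an arithmetic error in one of the four eliminations, since the coefficient matrix is mildly unpleasant and the four cases must each be done by hand. The mild subtlety worth flagging is that the system is underdetermined — five equations, six unknowns — so one cannot solve for the $E_k(12)$ outright the way one could for $B=6$; one must be content with expressing the remaining five in terms of the two ``free'' data $h$ and $E_0(12)$, and I would make sure the statement is phrased accordingly (as it already is). A secondary point to verify is that the elimination is genuinely consistent in all four cases — i.e.\ that the five equations are compatible for each right-hand side — which is automatic since the underlying identity~(\ref{eq16}) holds for the true values $E_k(12)$, but it is worth confirming that no spurious $0 = \text{nonzero}$ row appears during elimination.

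\hfill $\Box$
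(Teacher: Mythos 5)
Your proposal is correct and is essentially identical to the paper's own argument: the paper sets up exactly this $5\times 6$ system from Theorem \ref{thm2} with $B_{1}\in\{2,3,4,6,12\}$, divides by $h$, eliminates with $y_{0}=E_{0}(12)/h$ as the free parameter, carries out the case $N\equiv 23\pmod{24}$ explicitly, and leaves the other three right-hand sides ``to the interested reader.'' Your supporting observations --- that $\Chi(4)=1$, $\Chi(6)=\Chi(2)\Chi(3)$, $\Chi(12)=\Chi(3)$, that the coefficient matrix has rank $5$ so exactly one parameter survives, and that consistency is automatic because the true $E_{k}(12)$ satisfy (\ref{eq16}) --- are all accurate, so the only remaining work is the four routine eliminations.
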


Again take (\ref{eq18}), the table for $N=43 \equiv 19 \pmod{24}$, and insert markers for $\frac{N}{12}$ between $3$ and $4$, for $\frac{2N}{12}$ between $7$ and $8$, for $\frac{3N}{12}$ between $10$ and $11$, for $\frac{4N}{12}$ between $14$ and $15$ and for $\frac{5N}{12}$ between $17$ and $18$. With $h(-43)=1$ and $E_{0}(12)=-1$ one sees $E_{1}(12)=0=-h-E_{0},~E_{2}(12)=1=h,~E_{3}(12)=2=2h,~E_{4}(12)=3=2h-E_{0},~E_{5}(12)=-2=-h+E_{0}$.

It is interesting to note that without knowing $h$ or $E_{0}$ one knows some of the other values, for example when a $0$ occurs in the table. Also the values in the columns $E_{2}(12),~E_{3}(12)$ depend only on $h$.

\section{$D \equiv 0 \pmod{4}$}

Now use of even $B$ is ruled out. In this case, however, it will be seen that there are new symmetries on the set $X$ which do not occur when $D$ is odd. We recall the three types of $\Chi_{D}$ listed in the Introduction. In all of them $m,n$ are negative square-free integers.

\begin{enumerate}
\item[(D1)] $D=4m,~m \equiv 3 \pmod{4},~\Chi_{D}(x)=\Chi_{4}(x)\left(\frac{x}{|m|}\right)$ \label{D1}\label{D1}
\item[(D2)] $D=4m,~m=2n,~n \equiv 1 \pmod{4},~\Chi_{D}(x)=\Chi_{8}(x)\left(\frac{x}{|n|}\right)$ \label{D2}
\item[(D3)] $D=4m,~m=2n,~n \equiv 3 \pmod{4},~\Chi_{D}(x)=\Chi_{4}(x)\Chi_{8}(x)\left(\frac{x}{|n|}\right)$ \label{D3}
\end{enumerate}

In (D1), $D \equiv 4 \pmod{8}$, while in (D2) and (D3), $D \equiv 0 \pmod{8}$.

We will need the following facts which follow immediately from their definitions. For $x$ odd, $u$ even,

$$\Chi_{4}(x+u)=\Chi_{4}(x) ~\text{if}~ u \equiv 0 \pmod{4}$$
$$\text{and}$$
$$\Chi_{4}(x+u)=-\Chi_{4}(x) ~\text{if}~ u \equiv 2 \pmod{4}.$$

$$ \Chi_{8}(x+u)=\Chi_{8}(x) ~\text{if}~ u \equiv 0 \pmod{8}$$
$$\text{and}$$
$$\Chi_{8}(x+u)=-\Chi_{8}(x) ~\text{if}~ u \equiv 4 \pmod{8}.$$

As usual, $N=|D|,~X$ is the set of integers $x,1 \leq x \leq N$ and $gcd(x,N)=1$. Since $N$ is now even, all $x$ are odd. We break up $X$ into two parts: $L$, the numbers to the left of $\frac{N}{2}$, and $R$, the numbers to the right of $\frac{N}{2}$; $L=\left\{x : x < \frac{N}{2} \right\}$, $R=\left\{x : x > \frac{N}{2}\right\}$. Besides $\xi(x)=N-x$, which clearly interchanges $L$ and $R$, the set $X$ has another permutation $\eta$ defined by 
$$
\eta(x) = \left\{
\begin{array}{ll}
x + \frac{N}{2}; &\mbox{if} \; x\in L\\
x  - \frac{N}{2}; &\mbox{if}\; x \in R\\
\end{array}
\right.
$$
$\eta$ also is an involution, $\eta^{2}(x)=x$ and $\eta$ interchanges $L$ and $R$. Like $\xi$, $\eta$ also interchanges $\Chi$ values: $\Chi(\eta(x))=-\Chi(x)$. To show this we consider case by case. 

If (D1), $\Chi_{D}(\eta(x))=\Chi_{4}(\eta(x))\left(\frac{\eta(x)}{|m|}\right), \eta(x)= x \pm \frac{N}{2}=x \pm 2|m|$ and $|m| \equiv 1\pmod{4}$ so $\pm2|m| \equiv 2\pmod{4}$ and $\Chi_{4}(\eta(x))=\Chi_{4}(x \pm 2|m|)=-\Chi_{4}(x)$, but $\left(\frac{\eta(x)}{|m|} \right)=\left(\frac{x \pm 2|m|}{|m|} \right)=\left(\frac{x}{|m|} \right)$, showing here $\Chi(\eta(x))=-\Chi(x)$. 

In (D2), (D3), $N=8|n|, \frac{N}{2}=4|n|$, so $\Chi_{4}(x \pm \frac{N}{2})=\Chi_{4}(x),~  \left(\frac{x \pm 4|n|}{|n|} \right)=  \left(\frac{x}{|n|} \right)$ but $\Chi_{8}(x \pm \frac{N}{2})=\Chi_{4}(x \pm 4|n|)=-\Chi_{8}(x)$, since $n$ is odd, $4|n| \equiv 4\pmod{8}$.

We now claim $\xi, \eta$ commute: $\xi\eta=\eta\xi$.

Proof by direct computation. 

$$\text{If}~ x \in L,~ \xi\eta \left(x \right)=\xi \left(x+\frac{N}{2} \right)=N- \left(x+\frac{N}{2} \right)=\frac{N}{2}-x$$

and

$$\eta\xi \left(x \right)=\eta \left(N-x \right)=  \left(N-x \right)-\frac{N}{2}  ~~\left(\text{since}~ N-x \in R \right)~ = \frac{N}{2}-x.$$

$$\text{If}~ x \in R,~ \xi\eta \left(x \right)=\xi \left(x-\frac{N}{2} \right)=N- \left(x-\frac{N}{2} \right)=\frac{3N}{2}-x$$

and

$$\eta\xi \left(x \right)=\eta \left(N-x \right)=  \left(N-x \right)+\frac{N}{2}  ~~\left(\text{since}~ N-x \in L \right)~ = \frac{3N}{2}-x.$$

Define $\lambda=\xi\eta=\eta\xi$. Then, clearly, $\lambda$ preserves $\Chi$ values, $\Chi(\lambda(x))=\Chi(x)$,

$$
\lambda(x) = \left\{
\begin{array}{ll}
\frac{N}{2}-x; &\text{if}\;  x \in L \\
\frac{3N}{2}-x; &\text{if}\;  x \in R \\
\end{array}
\right. \; \mbox{and $\lambda$ preserves $L$ and $R$}.
$$

In fact, $\lambda |_{L}$ ($\lambda$ restricted to $L$) is a reflection in $\frac{N}{4}$. Because if $x \in L$, write $x=\frac{N}{4}+y,~|y| < \frac{N}{4},~ \lambda(x)=\frac{N}{2}-(\frac{N}{4}+y)=\frac{N}{4}-y$. In the same way one sees that $\lambda |_{R}$ is a reflection in $\frac{3N}{4}.$ To help see the picture, here is an example. Let $D=-40=4(-10),~-10=2 \times (-5),~ -5 \equiv 3 \pmod{4}$. So $-40$ is $(D3),~ \Chi_{-40}(x)=\Chi_{4}(x)\Chi_{8}(x)\left(\frac{x}{5}\right)$. We tabulate the values for $x \in X$.

\begin{equation}\label{eq19}
\scalebox{0.75}{
\begin{tabular}{r| r r r r c r r r r c r r r r c r r r r c}
& & & & & $\frac{N}{4}$& & & & & $\frac{N}{2}$& & & & & $\frac{3N}{4}$& & & & & $N$ \\
$x$& $1$& $3$& $7$& $9$& $\uparrow$& $11$& $13$& $17$& $19$& $\uparrow$& $21$& $23$& $27$& $29$& $\uparrow$& $31$& $33$& $37$& $39$& $\uparrow$ \\
\hline
$\Chi(x)$& $1$& $-1$& $1$& $1$& $$& $1$& $1$& $-1$& $1$& $$& $-1$& $1$& $-1$& $-1$& $$& $-1$& $-1$& $1$& $-1$& $$ \\ \hline
 & & & & &$\rightarrow \lambda \leftarrow$ & & & & &$\rightarrow \xi \leftarrow$ & & & & &$\rightarrow \lambda \leftarrow$ & & & & & \\
\end{tabular}}
\end{equation}

The values of $\Chi_{-40}(x)$ for $x \in L$ were calculated from the definition. Now $\eta$ maps $L$ on $R$, changing $\Chi$ values so the values $\Chi(x)$ for $x \in R$ are found by listing those for $1,3,...,19$ in $L$ under $21,...,39$ with a change of sign. The $\lambda$ with arrows under the marker $\frac{N}{4}$ indicates the action of $\lambda$ on $L$ as a reflection through $\frac{N}{4}$, and similarly, the $\lambda$ with arrows under the marker $\frac{3N}{4}$ indicates the action of $\lambda$ on $R$ as a reflection through $\frac{3N}{4}$. In both cases, the reflections preserve the $\Chi$ values. On the other hand, writing any $x$ as $x=\frac{N}{2}+y, |y| < \frac{N}{2}$, one has $\xi(x)=N-x=N- \left( \frac{N}{2}+y \right)=\frac{N}{2}-y$, so $\xi$ is a reflection on $X$ through the point $\frac{N}{2}$, interchanging $L$ and $R$, and also changing the $\Chi$ values, as indicated by the $\xi$ with arrows.

\begin{lem}\label{lm3}
$$h(D)=\sum_{1}^{\frac{N}{4}}\Chi(x).$$
\end{lem}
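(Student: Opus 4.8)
The plan is to peel the full Dirichlet sum $h=-\frac1N\sum_{x\in X}\Chi(x)x$ down in two stages: first to a sum over the left half $L$, then to a sum over $\{x\in X:x<N/4\}$, each time exploiting one of the two involutions just introduced together with the facts $\Chi(\eta(x))=-\Chi(x)$ and $\Chi(\lambda(x))=\Chi(x)$.

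First I would split $X=L\cup R$ in the class number formula and rewrite the $R$-part via the substitution $x=\eta(y)=y+\frac N2$ with $y\in L$. Since $\eta$ is a bijection $L\to R$ and $\Chi(\eta(y))=-\Chi(y)$,
$$\sum_{x\in R}\Chi(x)x=\sum_{y\in L}\bigl(-\Chi(y)\bigr)\Bigl(y+\tfrac N2\Bigr)=-\sum_{y\in L}\Chi(y)y-\tfrac N2\sum_{y\in L}\Chi(y).$$
Adding this to $\sum_{x\in L}\Chi(x)x$, the weighted terms $\Chi(y)y$ cancel, so $\sum_{x\in X}\Chi(x)x=-\tfrac N2\sum_{x\in L}\Chi(x)$ and hence $h=\tfrac12\sum_{x\in L}\Chi(x)$.

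Next I would split $L$ at $N/4$ into $L_1=\{x\in X:x<N/4\}$ and $L_2=\{x\in X:N/4<x<N/2\}$. One must first note that no $x\in X$ equals $N/4$: in case (D1), $N/4=|m|>1$ divides $N$, while in cases (D2),(D3), $N/4=2|n|$ is even whereas every $x\in X$ is odd. Thus $L=L_1\cup L_2$ disjointly with no boundary term. Since $\lambda|_L$ is the reflection $x\mapsto \frac N2-x$ through $N/4$, it is a fixed-point-free involution carrying $L_1$ bijectively onto $L_2$ and preserving $\Chi$ values, so $\sum_{x\in L_2}\Chi(x)=\sum_{x\in L_1}\Chi(x)$ and $\sum_{x\in L}\Chi(x)=2\sum_{x\in L_1}\Chi(x)$. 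Combining with $h=\tfrac12\sum_{x\in L}\Chi(x)$ yields $h(D)=\sum_{1}^{N/4}\Chi(x)$.

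The calculations are entirely routine; the only points needing care are the observation that $N/4\notin X$ (so the reflection $\lambda|_L$ pairs up all of $L$ with nothing left over) and keeping the index substitution $x=\eta(y)$ consistent so that the two weighted sums genuinely cancel. Everything else — that $\eta$ and $\lambda$ are the stated involutions, act on $\Chi$ as stated, and that $\lambda|_L$ is reflection in $N/4$ — has already been established in the text preceding the lemma.
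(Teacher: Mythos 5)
Your proof is correct and follows essentially the same route as the paper: split the Dirichlet sum over $L$ and $R$, use $\eta$ to cancel the weighted terms and reduce to $h=\tfrac12\sum_{x\in L}\Chi(x)$, then fold $L$ in half via the reflection $\lambda|_{L}$ through $\tfrac N4$. Your explicit check that $\tfrac N4\notin X$ is a nice touch the paper leaves implicit, but otherwise the two arguments coincide.
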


\begin{proof}
By the basic class number formula (\ref{eq1}), $-Nh=\sum_{1}^{N}\Chi(x)x$ =$~\sum_{_{1}} + \sum_{_{2}}$, where $\sum_{_{1}}$ is the sum over $x \in L$ and $\sum_{_{2}}$ is the sum over $x \in R$. In $\sum_{_{2}}$, make the substitution $x=\eta(y)=y+\frac{N}{2}$ for $y \in L$, so $\sum_{_{2}}$$=-\sum_{y \in L}\Chi(y)\left(y+\frac{N}{2}\right)$, since $\Chi(\eta(y))=-\Chi(y)$. Thus $\sum_{_{2}}$$=-\sum_{y \in L}\Chi(y)y-\frac{N}{2}\sum_{y \in L}\Chi(y)=$ $-\sum_{_{1}}$ $-\frac{N}{2}\sum_{y \in L}\Chi(y)$, and the $\sum_{_{1}}$ sums cancel out, leaving $-Nh--\frac{N}{2}\sum_{y \in L}\Chi(y)$. But $\sum_{y \in L}\Chi(y)=\sum_{1}^{\frac{N}{2}}\Chi(y)=\sum_{1}^{\frac{N}{4}}\Chi(y)+\sum_{\frac{N}{4}}^{\frac{N}{2}}\Chi(y)$ and this last sum is, setting $y=\lambda(x)$, $\sum_{1}^{\frac{N}{4}}\Chi(\lambda(x))=\sum_{1}^{\frac{N}{4}}\Chi(x)$, since $\lambda$ preserves the $\Chi$ values. So $-Nh=-\frac{N}{2} \left( 2\sum_{1}^{\frac{N}{4}}\Chi(x)  \right)$, which proves the lemma.
\end{proof}

This result can be refined if we assume $3 \not| D$.

\begin{thm}
Assume D is not divisible by $3$. 
\begin{equation*}
\begin{array}{l}
 \text{If}~ D \equiv 1 \pmod{3},~\text{then}~ h=\sum_{1}^{\frac{N}{6}}\Chi(x),~\sum_{\frac{N}{6}}^{\frac{N}{4}}\Chi(x)=0\\
 \text{If}~ D \equiv 2 \pmod{3},~\text{then}~ \sum_{1}^{\frac{N}{6}}\Chi(x)=0,~h=\sum_{\frac{N}{6}}^{\frac{N}{4}}\Chi(x).\\
\end{array} 
\end{equation*}

\end{thm}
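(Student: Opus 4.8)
The plan is to exploit the factor of $3$ in exactly the same way the paper already exploited the factor of $2$ via the involution $\eta$, but now using a base-$3$ dissection of the interval $(0,\tfrac{N}{2}]$. By Lemma \ref{lm3} we already know $h = \sum_1^{N/4}\Chi(x)$, so it suffices to show that when $D\equiv 1\pmod 3$ the sub-sum $\sum_{N/6}^{N/4}\Chi(x)$ vanishes (and symmetrically in the other case). The natural tool is Theorem \ref{thm2}: since $3\nmid D$, the base $B=6$ is admissible, and equation (\ref{eq16}) with $B_1=3$, $B_2=2$ gives a relation among the quantities $E_k(6)$, while (\ref{eq15}) for $B=6$ and (\ref{eq16}) for $B_1=2$ give two more. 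First I would write down these three linear equations in the unknowns $E_0(6),E_1(6),E_2(6)$ exactly as on the page preceding Theorem \ref{4pt2} (the coefficient matrix $\left(\begin{smallmatrix}1&1&1\\2&2&0\\5&3&1\end{smallmatrix}\right)$, right-hand sides $a=2-\Chi(2)$, $b=3-\Chi(3)$, $c=6-\Chi(6)$), but now with $\Chi=\Chi_D$ for $D\equiv 0\pmod 4$ rather than $D\equiv1\pmod 4$.

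The key new input, replacing the computation of $\Chi_D(2),\Chi_D(3)$ for odd $D$, is that here $\Chi_D(2)=0$ because $N$ is even, so $2\notin X$; hence in the base-$6$ picture the interval $I_0(6)=(0,\tfrac N6]$ already avoids the even residues and the relevant evaluations change. Concretely I would recompute $a,b,c$: since $2\mid N$ we have $\Chi_D(2)=0$, so $a=2$; and $\Chi_D(3)=\Chi_D(6)$ because $\Chi_D(2)$... — rather, I should instead argue directly that $E_k(6)$ relates to $E_k(3)$, and use the $\lambda$-reflection. The cleaner route: $\lambda|_L$ is a reflection through $\tfrac N4$ preserving $\Chi$, so $\sum_{N/4}^{N/2}\Chi(x)=\sum_0^{N/4}\Chi(x)=h$; combining with (\ref{eq15}) for $B=3$, namely $\sum_{k=0}^{0}(3-1-2k)E_k(3)=(3-\Chi(3))h$, i.e. $2E_0(3)=(3-\Chi_D(3))h$ where $E_0(3)=\sum_0^{N/3}\Chi(x)$. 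Since $3\nmid N$, $\Chi_D(3)=\pm1$ is governed by $D\bmod 3$ together with the reciprocity computation already performed in the paper; in one residue class $E_0(3)=h$, in the other $E_0(3)=2h$. Then I would use the reflection $\lambda$ once more: $\lambda$ sends the interval $(N/6,N/4]$ into $(N/4,N/3]$, so $\sum_{N/6}^{N/4}\Chi(x)=\sum_{N/4}^{N/3}\Chi(x)$, whence $E_0(3)=\sum_0^{N/6}\Chi + 2\sum_{N/6}^{N/4}\Chi$ — wait, more carefully $E_0(3)=\sum_0^{N/4}\Chi+\sum_{N/4}^{N/3}\Chi = h + \sum_{N/6}^{N/4}\Chi(x)$. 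Therefore $\sum_{N/6}^{N/4}\Chi(x) = E_0(3)-h$, which is $0$ when $E_0(3)=h$ and $h$ when $E_0(3)=2h$; matching these to $D\equiv1$ versus $D\equiv2\pmod3$ via the value of $\Chi_D(3)$ finishes the proof, and $\sum_0^{N/6}\Chi(x)=h-\sum_{N/6}^{N/4}\Chi(x)$ gives the complementary statement.

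The step I expect to be the main obstacle is pinning down $\Chi_D(3)$ in terms of $D\bmod 3$ across the three cases (D1), (D2), (D3): one must track the Jacobi-symbol factor $\left(\tfrac{3}{|m|}\right)$ or $\left(\tfrac{3}{|n|}\right)$ together with $\Chi_4(3)=-1$ and $\Chi_8(3)=-1$, apply quadratic reciprocity, and verify that in all three cases the outcome depends only on $D\bmod 3$ and gives $\Chi_D(3)=1$ precisely when $D\equiv1\pmod 3$. This is a finite but slightly delicate case check of the same flavor as the reciprocity bookkeeping preceding Theorem \ref{4pt2}; once it is done, the geometric reflection argument via $\lambda$ is immediate, and the identification $\sum_{N/6}^{N/4}\Chi(x)=E_0(3)-h$ closes both halves of the theorem simultaneously.
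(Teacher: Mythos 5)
Your final route is essentially the paper's own proof: take $B=3$ in Theorem \ref{thm2} to get $2E_{0}(3)=(3-\Chi(3))h$, establish $\Chi_{D}(3)=\left(\frac{D}{3}\right)$ by the (D1)--(D3) reciprocity case check, and use the $\lambda$-reflection of $\left(\frac{N}{6},\frac{N}{4}\right)$ onto $\left(\frac{N}{4},\frac{N}{3}\right)$ together with Lemma \ref{lm3} to solve for the two sub-sums. (Your abandoned $B=6$ opening would indeed have failed since $6$ is not prime to the even $N$, and the $\Chi_{D}(3)$ computation you defer is exactly the finite bookkeeping the paper carries out, with the outcome you predicted.)
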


\begin{proof}
We can take $B=3$ and (\ref{eq15}) in Theorem \ref{thm2} gives $2E_{0}(3)=(3-\Chi(3))h$. We claim $\Chi(3)=\left( \frac{D}{3} \right)$. The proof is by considering the cases (D1), (D2), and (D3).

For (D1), $\Chi_{4}(3)\left(\frac{3}{|m|}\right)=-\left(\frac{3}{|m|}\right)$. Here $|m| \equiv 1\pmod{4}$, so $\left(\frac{3}{|m|}\right)=\left(\frac{|m|}{3}\right)$ and $\Chi(3)=-\left(\frac{|m|}{3}\right)=\left(\frac{m}{3}\right)=\left(\frac{4m}{3}\right)=\left(\frac{D}{3}\right)$. In case (D2), $\Chi(3)=\Chi_{8}(3)\left(\frac{3}{|n|}\right)=-\left(\frac{3}{|n|}\right)=-\left(- \left(\frac{|n|}{3}\right) \right)=\left(\frac{|n|}{3}\right)$, since here $|n| \equiv 3 \pmod{4}$. But $D = 8n \equiv -n = |n| \pmod{3}$, so $\Chi(3)=\left( \frac{D}{3} \right)$. In case (D3), $\Chi(3)=\Chi_{4}(3)\Chi_{8}(3)\left(\frac{3}{|n|}\right)=(-1)(-1)\left(\frac{|n|}{3}\right)$, since here $|n| \equiv 1\pmod{4}$. Again $D = 8n \equiv -n=|n| \pmod{3}$, so $\Chi(3)=\left(\frac{D}{3}\right)$.

Thus , $\Chi(3)=1$ if $D \equiv 1 \pmod{3}$ and $\Chi(3)=-1$ if $D \equiv 2 \pmod{3}$. 

So, $E_{0}(3)=\left(\frac{3-\Chi(3)}{2}\right)h= \left\{
\begin{array}{ll}
h, &\text{if}~ D \equiv1(mod~3) \\
2h, &\text{if}~ D \equiv2(mod~3).
\end{array}
\right.$

But also $E_{0}(3)=\sum_{1}^{\frac{N}{3}}\Chi(x)=\sum_{1}^{\frac{N}{6}}\Chi(x)+\sum_{\frac{N}{6}}^{\frac{N}{4}}\Chi(x)+\sum_{\frac{N}{4}}^{\frac{N}{3}}\Chi(x)$. Now $\lambda$ maps $X \cap  \left( \frac{N}{6},\frac{N}{4} \right)$ onto $X \cap  \left( \frac{N}{4},\frac{N}{3} \right)$, so $\sum_{\frac{N}{4}}^{\frac{N}{3}}\Chi(x)=\sum_{\frac{N}{6}}^{\frac{N}{4}}\Chi(\lambda(x))=\sum_{\frac{N}{6}}^{\frac{N}{4}}\Chi(x).$ Set $S_{1}=\sum_{1}^{\frac{N}{6}}\Chi(x),~S_{2}=\sum_{\frac{N}{6}}^{\frac{N}{4}}\Chi(x)$, so $E_{0}(3)=S_{1}+2S_{2}$. On the other hand, by Lemma \ref{lm3} we always have $h=\sum_{1}^{\frac{N}{4}}\Chi(x)=S_{1}+S_{2}$. So if $D \equiv1(mod~3)$, there are two equations 

$$\begin{array}{l} S_{1}+ S_{2} = h\\ S_{1}+ 2S_{2} = h\\ \end{array}$$ 

which imply $S_{1}=h,~S_{2}=0$, while if $D \equiv2(mod~3)$, the equations 

$$\begin{array}{l} S_{1}+ S_{2} = h\\ S_{1}+ 2S_{2} = 2h\\ \end{array}$$ 

imply $S_{1}=0,~S_{2}=h$, which proves the theorem. 
\end{proof}

For example, referring back to (\ref{eq19}) for $D=-40 \equiv2 \pmod{3},~\frac{N}{6}=6 \frac{2}{3}$, so $S_{1}=\Chi(1)+\Chi(3)=0,~S_{2}=\Chi(7)+\Chi(9)=2=h(-40)$. 

For $D=-56 \equiv1 \pmod{3},~\frac{N}{6}=9 \frac{1}{3}, ~\frac{N}{4}=14$ and $\Chi_{-56}(x)=\Chi_{8}(x)\left( \frac{x}{7} \right)$. The values are

\begin{center}
\begin{tabular}{r | r r r r c r r c}
& & & & & $\frac{N}{6}$& & & $\frac{N}{4}$ \\
 $x$ & $1$& $3$& $5$& $9$& $\uparrow$& $11$& $13$& $\uparrow$ \\ 
\hline
$\Chi(x)$ & $1$& $1$& $1$& $1$& $$& $-1$& $1$& \\ 
\end{tabular}
\end{center}

\begin{equation*}
\begin{array}{l}
S_{1}=\Chi(1)+\Chi(3)+\Chi(5)+\Chi(9)=4=h(-56) ~\text{and}~\\
S_{2}=\Chi(11)+\Chi(13)=-1+1=0. \\
\end{array} 
\end{equation*}


\end{document}